\title{Stability Conditions and Maximal Green Sequences in Abelian Categories}
\author{Thomas Br\"ustle, David Smith and Hipolito Treffinger}
\theoremstyle{plain} 
\newtheorem{theorem}{Theorem}[section]
\newtheorem{prop}[theorem]{Proposition}
\newtheorem{lem}[theorem]{Lemma}
\newtheorem{cor}[theorem]{Corollary}
\theoremstyle{remark}
\newtheorem{rmk}[theorem]{Remark}
\newtheorem{ex}[theorem]{Example}
\theoremstyle{definition}
\newtheorem{defi}[theorem]{Definition}
\newtheorem{conj}[theorem]{Conjecture}
\def\Hom{\mbox{Hom}}
\def\Obj{\mbox{\rm Obj}}
\def\Filt{\mbox{Filt}}
\def\coker{\mbox{coker}}
\def\im{\mbox{im}}
\def\Fac{\mbox{Fac}\,}
\newcommand{\A}{\mathcal{A}}
\newcommand{\X}{\mathcal{X}}
\newcommand{\F}{\mathcal{F}}
\renewcommand{\P}{\mathcal{P}}
\newcommand{\T}{\mathcal{T}}
\newcommand{\ra}{\rightarrow}
\newcommand{\D}{\mathfrak{D}}
\newcommand{\mP}{\mathbb{P}}
\newcommand{\N}{\mathbb{N}}
\newcommand{\R}{\mathbb{R}}
\begin{document}

\maketitle

\abstract{In this paper we study the stability functions on abelian categories introduced by Rudakov in \cite{Ru} and their relation with torsion classes and maximal green sequences. 
Moreover we introduce a new kind of stability function which is compatible with the wall and chamber of the category.}

\section{Introduction}
The concept of stability condition was introduced in algebraic geometry by Mumford in \cite{Mumford1965} to study moduli spaces under the actions of a group.
The success of this new approach was such that many mathematicians started adapting these tools to different branches of mathematics.
In the case of representation theory of quivers, they were introduced in seminal papers by Schofield \cite{Scho} and King \cite{Ki}, and the general notion of stability was formalised in the context of abelian categories by Rudakov \cite{Ru}.

We  study Rudakov's notion of stability on an abelian length category $\mathcal{A}$, which is given by a function  $\phi: \Obj^*(\A) \to \P$ over $\A$ that assigns to each non-zero object $X$ a \textit{phase} $\phi(X)$, which is an element of a totally ordered set $\P$, satisfying the so-called see-saw condition on short exact sequences, see
definition \ref{seesaw}.
A non-zero object $M$ in $\mathcal{A}$ is said to be \textit{$\phi$-stable} (or \textit{$\phi$-semistable}) if every non-trivial subobject $L\subset M$ satisfies $\phi(L)< \phi(M)$ ( or $\phi(L)\leq \phi(M)$, respectively).
Inspired by \cite{B16}, but in the more general context of abelian categories allowing infinitely many simple objects, we then define for each phase $p$ a torsion pair $(\T_p, \F_p)$ in $\mathcal{A}$ as follows (see Proposition \ref{torsionpair}):

$$\T_p=\{M\in\A \ : \phi(N)\geq p \text{ for every quotient $N$ of $M$}\}$$
$$\F_p=\{M\in\A \ : \ \phi(N)< p \text{ for every subobject $N$ of $M$}\}$$

Since $\T_p\supseteq\T_q$ when $p\leq q$,  a stability function $\phi$ induces a chain of torsion classes in $\A$.  
We adapt the definition of maximal green sequence introduced by Keller in \cite{KellerMaximalGreenSequences} for cluster algebras to the context of abelian categories. 
In this context, a maximal green sequence in an abelian category $\A$ is a finite not refinable increasing chain of torsion classes starting with the zero class and ending in $\A$.
The equivalence of this definition to the original on cluster algebras is shown in \cite[Proposition 4.9]{BSTw&c} using $\tau$-tilting theory
.
Following Engenhorst \cite{E14}, we call a stability function $\phi$ on $ \A$ \textit{discrete} if it admits (up to isomorphism) at most one $\phi$-stable object for every phase at $p$. 

The first main result of this paper characterises which stability functions induce maximal green sequences in $\A$.
\begin{theorem}[Theorem \ref{maximalgreensequences}]
Let $\phi:\A\ra \P$ be a stability function that admits no maximal phase. Then $\phi$ induces a maximal green sequence of torsion classes in $\A$ if and only if $\phi$ is a discrete stability function inducing only finitely many different torsion classes $\T_p$.
\end{theorem}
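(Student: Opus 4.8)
The plan is to prove both implications through a single structural analysis of how the chain $\{\T_p\}_{p\in\P}$ changes as $p$ increases, tying each ``jump'' to the subcategory of $\phi$-semistable objects of a fixed phase. Write $\C_{p}\subseteq\A$ for the full subcategory of $\phi$-semistable objects of phase exactly $p$, together with the zero object; by Rudakov's work \cite{Ru} this is an abelian length subcategory of $\A$ whose simple objects are exactly the $\phi$-stable objects of phase $p$. The technical backbone I would establish first is a comparison between torsion classes of $\A$ lying in a jump and torsion classes of $\C_{p_0}$. For a fixed phase $p_0$ set $\T_{p_0^-}=\bigcap_{q<p_0}\T_q$; monotonicity of $p\mapsto\T_p$ gives $\T_{p_0^-}\supseteq\T_{p_0}$, and I would show that sending a torsion class $\mathcal{U}$ with $\T_{p_0}\subseteq\mathcal{U}\subseteq\T_{p_0^-}$ to its phase-$p_0$ part $\mathcal{U}\cap\C_{p_0}$ is an isomorphism from the interval $[\T_{p_0},\T_{p_0^-}]$ in the lattice of torsion classes of $\A$ onto the lattice of torsion classes of $\C_{p_0}$. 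The nontrivial point is that a $\phi$-stable object $S$ of phase $p_0$ lies in $\T_{p_0}$ but in no $\T_q$ with $q>p_0$: applying the see-saw condition (Definition \ref{seesaw}) to $0\to L\to S\to N\to 0$ shows every proper quotient of $S$ has phase $>p_0$, while $S$ is its own quotient of phase $p_0$. Hence $\C_{p_0}\neq 0$ precisely when $p_0$ is a genuine jump of the chain.

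With this correspondence in hand, necessity is short. If $\phi$ induces a maximal green sequence, the induced chain of distinct torsion classes is by definition finite, so only finitely many distinct $\T_p$ occur. Moreover, at each jump phase $p_0$ the interval $[\T_{p_0},\T_{p_0^-}]$ must be trivial, for otherwise a torsion class of $\C_{p_0}$ strictly between $\{0\}$ and $\C_{p_0}$ would, via the comparison above, refine the sequence. Thus $\C_{p_0}$ has a unique simple object, i.e.\ a unique $\phi$-stable object of phase $p_0$; and since any $\phi$-stable object forces its phase to be a jump, every phase carries at most one stable object, so $\phi$ is discrete.

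For sufficiency, assume $\phi$ is discrete and induces only finitely many torsion classes $\T^{(0)}\supsetneq\cdots\supsetneq\T^{(k)}$. Because $\phi$ admits no maximal phase, for every nonzero $M$ there is $M'$ with $\phi(M')>\phi(M)$, and taking $q=\phi(M')$ gives $M\notin\T_q$; hence $\bigcap_p\T_p=\{0\}$, which equals the smallest value $\T^{(k)}$. On the other hand every $M$ lies in $\T_p$ as soon as $p$ is below the phase of the lowest Harder--Narasimhan factor of $M$ (such filtrations exist in Rudakov's setting), so $\bigcup_p\T_p=\A=\T^{(0)}$. Thus the chain runs from $\{0\}$ to $\A$, and it remains to see it is not refinable. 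Any refinement inserts a torsion class into some interval $[\T^{(i+1)},\T^{(i)}]=[\T_{p_0},\T_{p_0^-}]$, hence a nonzero proper torsion class of $\C_{p_0}$; but discreteness makes $\C_{p_0}$ a length category with a single simple object, whose only torsion classes are $\{0\}$ and $\C_{p_0}$. No refinement exists, so the chain is a maximal green sequence.

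The main obstacle is the comparison isomorphism $[\T_{p_0},\T_{p_0^-}]\cong\mathrm{tors}\,\C_{p_0}$ of the first paragraph. The delicate points are that $\mathcal{U}\mapsto\mathcal{U}\cap\C_{p_0}$ genuinely lands in, and surjects onto, torsion classes of $\C_{p_0}$ (using that $\C_{p_0}$ is closed under the relevant extensions, sub- and quotient objects within phase $p_0$), and that it is injective on the interval (using the Harder--Narasimhan filtration to reconstruct $\mathcal{U}$ from its phase-$p_0$ part together with the fixed endpoints $\T_{p_0}$ and $\T_{p_0^-}$). Everything else—the see-saw computations, the monotonicity of $p\mapsto\T_p$, and the two endpoint identifications—is routine once this correspondence is available.
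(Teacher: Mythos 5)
Your overall architecture is the right one, and it is essentially the paper's: the paper's Proposition \ref{mutation} also reduces refinability of a jump in the chain $\{\T_p\}$ to the structure of the wide subcategory $\A_{p}$ of fixed-phase semistables (Proposition \ref{fix-slope}), whose simples are the stables (Remark \ref{simplestable}). But your key comparison lemma is false as stated, because the interval is on the wrong side of $\T_{p_0}$. You set $\T_{p_0^-}=\bigcap_{q<p_0}\T_q\supseteq\T_{p_0}$ and claim $\mathcal{U}\mapsto\mathcal{U}\cap\C_{p_0}$ is a lattice isomorphism $[\T_{p_0},\T_{p_0^-}]\cong\mathrm{tors}\,\C_{p_0}$. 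Yet the very fact you single out --- that every $\phi$-stable $S$ of phase $p_0$ lies in $\T_{p_0}$ and in no $\T_q$ with $q>p_0$ --- shows that all stables of phase $p_0$ already belong to $\T_{p_0}$, the \emph{bottom} of your interval; hence $\mathcal{U}\cap\C_{p_0}$ contains every simple of $\C_{p_0}$ for every $\mathcal{U}$ in the interval and, being extension-closed, equals $\C_{p_0}$ identically. The map is constant, not an isomorphism. Concretely, for $A_2$ with $\phi(S_1)=1<\phi(P_1)=2<\phi(S_2)=3$, the jump $\add S_2\subsetneq\A$ in the chain is governed by $\C_1=\add S_1$ (the phase $1$ is the \emph{maximum} of the equivalence class of the larger torsion class $\A$), while your recipe attaches to it $\C_{p_0}$ for $p_0\in(1,3]$ with $\T_{p_0}=\add S_2$, which is $0$ for $p_0<3$ and $\add S_2$ for $p_0=3$ --- neither detects the jump. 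The correct pairing is between $\mathrm{tors}\,\C_{p_0}$ and the interval $\bigl[\bigvee_{q>p_0}\T_q,\ \T_{p_0}\bigr]$, with $\T_{p_0}$ on top; both your necessity and sufficiency arguments invoke the correspondence with the wrong orientation, so neither goes through as written.

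Even after reorienting, the asserted lattice isomorphism is the entire technical content of the proof and you only announce it (``I would show that\dots''); surjectivity and injectivity of $\mathcal{U}\mapsto\mathcal{U}\cap\C_{p_0}$ on such an interval are genuinely delicate (this is a statement about wide intervals in the lattice of torsion classes, not a formal consequence of the definitions). The paper deliberately proves something weaker but sufficient: Proposition \ref{mutation} shows directly that a consecutive pair $\T_p\subsetneq\T_q$ admits no intermediate torsion class iff no intermediate $\T_r$ exists and $\phi$ is discrete on the class of $q$, with the hard direction being the verification that $\text{Filt}(\A_{\geq p}\cup\{N\})$, built from a second stable object $N$ of the same phase, is closed under quotients (via Harder--Narasimhan filtrations and the abelian structure of $\A_q$). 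You would need to supply an argument of comparable substance for the corrected correspondence before the rest of your proof (whose endpoint identifications $\bigcap_p\T_p=\{0\}$ and $\bigcup_p\T_p=\A$ are fine and match Lemma \ref{supremum}) can stand.
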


\bigskip
The wall and chamber structure of a module category has been introduced by Bridgeland in \cite{B16} to give an algebraic interpretation of scattering diagrams studied in mirror symmetry by Gross, Hacking, Keel and Kontsevich, see \cite{GHKK}. 
It has been shown in \cite{BSTw&c} that all functorially finite torsion classes of an algebra can be obtained from its wall and chamber structure. 
We consider in this paper more generally abelian categories $\A$ with finitely many simple objects. In this context, we provide a construction of stability functions on $\A$  that conjecturally induce all its maximal green sequences.
These stability functions are induced by certain curves, called \textit{red paths} in the wall and chamber structure of $\A$.
In particular we show that red paths give a non-trivial compatibility between the stability conditions introduced by King in \cite{Ki} and the stability functions introduced by Rudakov in \cite{Ru}.
As a consequence, we show that the wall and chamber structure of an algebra can be recovered using red paths, see Theorem \ref{paths}.
\bigskip

This paper is a revised version of one part of a preprint \cite{BST}. 
We would like to point to the paper \cite{BCZ} by Barnard, Carrol and Zhu  which obtains proposition \ref{mutation} in the context of module categories of finite dimensional algebras over an algebraically closed field. 

We refer to the textbooks \cite{ARS,AsSS,bookRalf} for background material.

\paragraph{Acknowledgements.}
The authors want to thank Kiyoshi Igusa for his pertinent remarks and helpful discussions.
They also thank Patrick Le Meur, whose comments on the PhD thesis of the third author lead to the current version of this work.

The first and the second author were supported by Bishop's University, Université de Sherbrooke and NSERC of Canada, and the third author was supported by the EPSRC funded project EP/P016294/1.

\section{Stability Conditions}
The aim of this section is to study Rudakov's \cite{Ru} definition of stability on abelian categories. While \cite{Ru} uses the notion of a proset, we prefer to work with stability functions. 
We first review this concept of stability here, and then discuss torsion classes arising from a stability function.

\subsection{Stability functions}
Throughout this section, we consider an essentially small abelian category $\mathcal{A}$.
We denote by $\Obj^*(\A)$ the class of non-zero objects of $\A$.

\begin{defi}\label{seesaw}
Let 
$(\mathcal{P},\leq)$ be a totally ordered set.
A function $\phi :  \Obj^*(\mathcal{A}) \to \mathcal{P}$ 
which is constant on isomorphism classes is said to be a \textit{stability function} if for each short exact sequence  $0 \to L \to M \to N \to 0$ of non-zero objects in $\mathcal{A}$ one has the so-called \textit{see-saw} (or \textit{teeter-totter}) property, that is exactly one of the following holds:
$$
\begin{array}{ll}
\text{either} & \phi(L) < \phi(M) < \phi(N), \\
\text{or} & \phi(L) > \phi(M) > \phi(N),\\
\text{or} & \phi(L) = \phi(M) = \phi(N).
\end{array}
$$
\end{defi}

For a non-zero object $x$ of $\A$, we refer to $\phi(x)$ as the \textit{phase} (or \textit{slope}) of $x$.

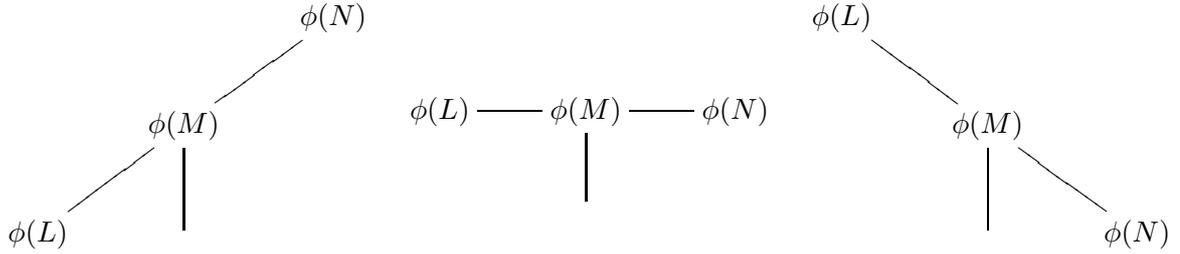
\begin{figure}
$$
\begin{array}{ccc}
\xymatrix{ & &\phi(N) \\
           &\phi(M)\ar@{-}[d]\ar@{-}[ru] & \\
           \phi(L)\ar@{-}[ru]& &}
&\xymatrix{ & & \\
           \phi(L)\ar@{-}[r]&\phi(M)\ar@{-}[d]\ar@{-}[r] &\phi(N) \\
           & &}
&\xymatrix{\phi(L)\ar@{-}[dr] & & \\
           &\phi(M)\ar@{-}[d]\ar@{-}[dr] & \\
           & &\phi(N)}
\end{array}
$$
\caption{The see-saw (or teeter-totter) property.}
\label{fig:seesaw}
\end{figure}

\begin{rmk}
Note that the image by $\phi$ of the zero object in $\A$ is not well defined if there exist two non-zero objects $M$ and $N$ such that $\phi(M)\neq \phi(N)$.
Indeed, is enough to take the following short exact sequences.
$$
\begin{array}{lcr}
0 \to 0 \to M \to M \to 0 & & 0 \to 0 \to N \to N \to 0
\end{array}
$$
Then, applying the see-saw property twice we conclude that $\phi(0)=\phi(M)$ and $\phi(N) = \phi(0)$.
Since by hypothesis $\phi(M) \neq \phi(N)$ we have that $\phi(0)$ is not well defined.
That is the reason why stability functions should be defined only over the non-zero objects of the category.
\bigskip

Note that Rudakov defined  stability structures using the notion of a proset, that is, a pre-order $\prec$ on $\Obj^*(\mathcal{A})$ satisfying for all $L,M$ in $\Obj^*(\mathcal{A}) $  that $L \prec M$ or $M \prec L$, or both. 
We can define an equivalence relation on $\Obj^*(\mathcal{A}) $ by setting $L \sim M$ when both $L \prec M$ and $M \prec L$ are satisfied, and denote by $\mathcal{P} = \Obj^*(\mathcal{A}) / \sim$ the set of equivalence classes. 
The pre-order $\prec$ thus turns $\mathcal{P}$ into a totally ordered set, whose order relation we denote by $\le$.
The projection $\phi: \Obj^*(\mathcal{A}) \to \mathcal{P} $ that assigns to each object its equivalence class is then the function from Definition \ref{seesaw}, and the notion of stability we consider here is equivalent to Rudakov's original formulation.
\end{rmk}
\bigskip

The stability functions as defined above generalise several notions of stability conditions present in the literature as we can see in the following remarks. 

\begin{rmk}
In \cite{Ki}, King adapted the Geometric Invariant Theory introduced by Mumford in \cite{Mumford1965} to the context of abelian categories with Grothendieck group of finite rank. 
In \cite[Proposition 3.4]{Ru}, Rudakov shows that every stability condition as defined by King induces a stability function. 
\end{rmk}

\begin{rmk}
Stability functions are present in the physics literature, and in this case they are induced by a central charge $Z$. We recall this notion here, following the treatment given in \cite{B}:

A \textit{linear stability function} on an abelian category $\mathcal{A}$ is given by a central charge, that is, a group homomorphism $Z:K(\mathcal{A})\rightarrow \mathbb{C}$ on the Grothendieck group $K(\mathcal{A})$ such that for all $0\neq M\in \mathcal{A}$ the complex number $Z(M)$ lies in the strict upper half-plane 
$$\mathbb{H}=\{r \cdot\text{exp}(i\pi\phi):r>0\text{ and } 0 <\phi \leq 1\}.$$
Given such a central charge $Z:K(\mathcal{A})\rightarrow \mathbb{C}$, the phase of an object $0\neq M\in \mathcal{A}$ is defined to be 
$$\phi(M)=(1/\pi)\text{arg}Z(M).$$
A simple argument on the sum of vectors in the plane shows that the phase function $\phi: \Obj^*(\mathcal{A})\rightarrow (0,1]$ satisfies the see-saw property.
\end{rmk}
\bigskip

The most important feature of a stability function $\phi$ is the fact that they create a distinguished subclass of objects in $\A$ called \textit{$\phi$-semistables}.
They are defined as follows.

\begin{defi}\cite[Definition 1.5 and 1.6]{Ru} \label{defss}
Let $\phi:\Obj^*(\mathcal{A})\rightarrow \mathcal{P}$ be a stability function on $\mathcal{A}$. 
A non-zero object $M$ of $\A$ is said to be \textit{$\phi$-stable} (or \textit{$\phi$-semistable}) if every non-trivial subobject $L\subset M$ satisfies $\phi(L)< \phi(M)$ ( or $\phi(L)\leq \phi(M)$, respectively).
\end{defi}

\begin{rmk}
Note that, due to the see-saw property, one can equally define the $\phi$-stable (or $\phi$-semistable) objects as those objects $M$ whose quotient objects $N$ satisfy  $\phi(N) > \phi(M)$ (or $\phi(N)\geq \phi(M)$, respectively).
\end{rmk}

The following theorem from \cite{Ru} implies that morphisms between $\phi$-semistable objects respect the order induced by $\phi$, that is, $\Hom_{\A}(M,N)=0$ whenever  $M,N$ are $\phi$-semistable and  $\phi(M)>\phi(N)$.  

\begin{theorem}\cite[Theorem 1]{Ru}\label{homzero}
Let $\phi:\Obj^*(\mathcal{A})\rightarrow \mathcal{P}$ be a stability function on $\mathcal{A}$ and let $f: M \to N$ be a non-zero morphism in $\mathcal{A}$ between two $\phi$-semistable objects $M,N$ such that $\phi(M) \geq \phi(N)$. Then
\begin{itemize}
\item[(a)] $\phi(M) = \phi(N)$.
\item[(b)] If $N$ is $\phi$-stable then $f$ is an epimorphism.
\item[(c)] If $M$ is $\phi$-stable then $f$ is a monomorphism.
\item[(d)] If $M$ and $N$ are both $\phi$-stable then $f$ is an isomorphism.
\end{itemize}
\end{theorem}

\begin{cor}\label{noniso}
Let $\phi:\Obj^*(\mathcal{A})\rightarrow \mathcal{P}$ be a stability function on $\mathcal{A}$ and let $M,N\in\A$ be two non-isomorphic $\phi$-stable objects such that $\phi(M)=\phi(N)$.
Then $$\emph{Hom}_{\A}(M,N)=0.$$
\end{cor}

\begin{rmk}\label{bricks}
As observed in \cite{Ru}, Theorem \ref{homzero} implies that $\phi$-stable objects are bricks when $\A$ is a Hom-finite $k$-category over an algebraically closed field $k$. 
Here $M$ is called a \textit{brick} when End$(M) \simeq k$. 
This implies in particular that $\phi$-stable objects are indecomposable. 
In fact, it is easy to see that $\phi$-stable objects are always indecomposable, for any abelian category $\A$.
\end{rmk}

\subsection{Harder-Narasimhan filtration and stability functions} 
From now on, we assume that the abelian category $\mathcal A$ is a \textit{length category}, that is, each object $M$ admits a filtration 
$$0=M_0\subsetneq M_1\subsetneq M_2\subsetneq \dots \subsetneq M_{l-1} \subsetneq M_l=M$$
such that the quotients $M_i/M_{i-1}$ are simple. In particular, $\mathcal{A}$ is both noetherian and artinian. For a finite dimensional $k$-algebra $A$ over a field  $k$, the category mod\;$A$ of finitely generated $A$-modules is a length category.

We borrow the following terminology from \cite{B}, however the concept was already used in \cite{Ru}.

\begin{defi}\label{defMDQ}
Let $\mathcal{A}$ be an abelian length category and let $M$ be a non-zero object of $\A$.
\begin{enumerate}[label=(\alph*)]
\item A pair $(N,p)$ consisting of a non-zero object $N\in\mathcal{A}$ and an epimorphism $p:M\rightarrow N$ is said to be a \textit{maximally destabilising quotient} (or m.d.q. for short) of $M$ if for every epimorphism $p':M\rightarrow N'$ satisfies $\phi(N')\geq\phi(N)$, and moreover, if  $\phi(N)=\phi(N')$, then the morphism $p'$ factors through $p$. 
\item A pair $(L,i)$ consisting of a non-zero object $L\in\mathcal{A}$ and a monomorphism $i:L\rightarrow M$ is a \textit{maximally destabilising subobject } (or m.d.s. for short) of $M$ if for every monomorphism $i':L'\rightarrow M$ satisfies $\phi(L')\leq\phi(L)$, and moreover, if  $\phi(L)=\phi(L')$ then the morphism $i'$ factors through $i$.
\end{enumerate}
\end{defi}

We sometimes omit the epimorphism $p$ when referring to a maximally destabilising quotient, similarly for maximally destabilising subobjects.

\begin{rmk}
Note that if follows directly from Definition \ref{defMDQ} that $\phi(L) \leq \phi(M) \leq \phi(N)$, where $L$ and $N$ are the maximally destabilising subobject and quotient of $M$, respectively.
This property will be used often in the proofs of the present paper.
\end{rmk}

An important property of maximally destabilising subobjects and quotients is that they are always $\phi$-semistable.

\begin{lem}\label{MDQunicity}
Let  $\phi:\Obj^*(\mathcal{A})\rightarrow \mathcal{P}$ be a stability function on $\mathcal{A}$ and let $M$ be a non-zero object in $\mathcal{A}$. Then:
\begin{enumerate}[label=(\alph*)]
\item[(a)] The maximally destabilising object $(N,p)$ of $M$ is $\phi$-semistable and unique up to isomorphism;
\item[(b)] The maximally destabilising subobject $(L,i)$ of $M$ is $\phi$-semistable and unique up to isomorphism.
\end{enumerate}
\end{lem}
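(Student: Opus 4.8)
The plan is to prove existence and uniqueness of the maximally destabilizing quotient, and show it is semistable; part (b) then follows by a dual argument (passing to the opposite category, where subobjects become quotients and the order on $\mathcal{P}$ is reversed). Let me focus on part (a).
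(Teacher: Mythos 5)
Your proposal is only a plan, not a proof: you announce that you will establish existence, uniqueness, and semistability of the maximally destabilizing quotient, and that (b) follows by duality, but you then stop without supplying any argument for part (a). As it stands there is nothing to check. Two concrete points need to be addressed. First, the lemma as stated does not ask for existence --- that is deferred to Rudakov's Harder--Narasimhan theorem (Theorem \ref{HN}) and genuinely requires the length hypothesis on $\mathcal{A}$; what the lemma claims is that \emph{if} $(N,p)$ is a maximally destabilizing quotient then it is semistable and unique up to isomorphism. By folding existence into your plan you are promising more than is needed and more than can be delivered at this point in the paper. Second, the two claims that do need proof each hinge on a specific feature of Definition \ref{defMDQ} that your outline never invokes: for semistability, given any quotient $p'\colon N\to\widetilde{N}$, the composite $p'p\colon M\to\widetilde{N}$ is an epimorphism from $M$, so the defining minimality of $\phi(N)$ among phases of quotients of $M$ gives $\phi(\widetilde{N})\ge\phi(N)$, which (by the quotient characterization of semistability) is exactly semistability of $N$; for uniqueness, two maximally destabilizing quotients $(N,p)$ and $(N',p')$ force $\phi(N)=\phi(N')$, and then the factorization clause of the definition produces $f\colon N\to N'$ and $f'\colon N'\to N$ with $p=f'p'$ and $p'=fp$, whence $p=f'fp$ and, since $p$ is an epimorphism, $f'f=\mathrm{id}_N$ (and symmetrically $ff'=\mathrm{id}_{N'}$). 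Without these steps --- in particular the use of the factorization-through-$p$ condition and the cancellation of the epimorphism $p$ --- the proof is missing its substance. The appeal to duality for part (b) is fine in principle and matches the paper, but it only becomes meaningful once part (a) is actually proved.
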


\begin{proof}
This is a follows directly from \cite[Proposition 1.9]{Ru} and its dual. 
\end{proof}
 
The following theorem from \cite{Ru} implies in particular that  every object admits a maximally destabilising quotient and a maximally destabilising subobject.

\begin{theorem}\cite[Theorem 2, Proposition 1.13]{Ru}\label{HN}
Let $\mathcal{A}$ be an abelian length category with a stability function $\phi:\Obj^*\mathcal{A}\rightarrow \mathcal{P}$, and let $M$ be a non-zero object in $\mathcal{A}$.  
Up to isomorphism, $M$ admits a unique \textit{Harder-Narasimhan filtration}, that is a filtration 
$$0=M_0\subsetneq M_1\subsetneq M_2\subsetneq \dots \subsetneq M_{n-1} \subsetneq M_n=M$$
such that 
\begin{enumerate}[label=(\alph*)]
\item the quotients $F_i=M_i/M_{i-1}$ are $\phi$-semistable, 
\item $\phi(F_n)<\phi(F_{n-1})<\dots<\phi(F_2)<\phi(F_1)$.
\end{enumerate}
Moreover, $F_1=M_1$ is the maximally destabilising subobject of $M$ and $F_n=M_n/M_{n-1}$ is the maximally destabilising quotient of $M$.
\end{theorem}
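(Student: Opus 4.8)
The plan is to prove existence by induction on the length $\ell(M)$ and to deduce uniqueness from the Hom-vanishing of Theorem \ref{homzero}; the engine of the whole argument is the construction of the maximally destabilizing subobject $M_1$ of Definition \ref{defMDQ}, which will simultaneously play the role of $F_1$ and force the phases to drop. Before starting the induction I would record the standard consequence of Theorem \ref{homzero} that, for a fixed phase $p$, the $\phi$-semistable objects of phase $p$ form a subcategory of $\A$ closed under subobjects, quotients and extensions: a nonzero map between semistables of phase $p$ has kernel, image and cokernel again semistable of phase $p$. In particular, if $L_1,L_2\subseteq M$ are semistable subobjects with $\phi(L_1)=\phi(L_2)=p$, then $L_1+L_2$, being a quotient of $L_1\oplus L_2$, is again semistable of phase $p$. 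I would also note (by a one-line induction on length) that every nonzero object contains a semistable subobject.

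The key step is the construction of $M_1$. Since $\A$ is a length category it is Noetherian and Artinian, and combining the closure under sums above with the Noetherian property I would show that the phases realised by semistable subobjects of $M$ attain a maximum $p_{\max}$, and that the sum $M_1$ of all semistable subobjects of phase $p_{\max}$ is itself semistable of phase $p_{\max}$ and is the unique largest such subobject. I then claim $M_1$ is the maximally destabilizing subobject. Indeed $\phi(M)\le p_{\max}$ since $M$ is a subobject of itself, and any monomorphism $i'\colon L'\to M$ satisfies $\phi(L')\le p_{\max}$: otherwise $L'$ would contain a semistable subobject of phase strictly above $p_{\max}$, contradicting maximality. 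When $\phi(L')=p_{\max}$, every subobject of $L'$ is a subobject of $M$ and hence has phase $\le p_{\max}=\phi(L')$, so $L'$ is semistable of phase $p_{\max}$ and therefore $L'\subseteq M_1$, i.e. $i'$ factors through $M_1$. By Lemma \ref{MDQunicity}(b) this $M_1$ is semistable and unique up to isomorphism.

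Setting $F_1=M_1$ I pass to $\bar M=M/M_1$, which by induction carries an HN filtration with graded pieces $\bar F_2,\dots,\bar F_n$ of strictly decreasing phases whose top piece $\bar F_2$ is the maximally destabilizing subobject of $\bar M$. Pulling this filtration back along $M\twoheadrightarrow\bar M$ (using the correspondence between subobjects of $\bar M$ and subobjects of $M$ containing $M_1$) gives the desired chain with $F_i=\bar F_i$ for $i\ge 2$, and it remains only to check $\phi(F_1)>\phi(F_2)$. If instead $\phi(\bar F_2)\ge p_{\max}$, then the preimage $P=\pi^{-1}(\bar F_2)$ fits in a short exact sequence $0\to M_1\to P\to\bar F_2\to 0$; applying the see-saw property and the maximality of $p_{\max}$ shows that $P$ is a semistable subobject of $M$ of phase $p_{\max}$ strictly containing $M_1$, contradicting the construction of $M_1$. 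Hence $\phi(F_2)<p_{\max}=\phi(F_1)$, which completes the existence proof. The claim that $F_n$ is the maximally destabilizing quotient follows by the dual construction (building the filtration from the top via the maximally destabilizing quotient of Definition \ref{defMDQ}(a)), which by the uniqueness below agrees with the filtration just produced.

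For uniqueness, given two HN filtrations with top pieces $F_1=M_1$ and $F_1'=M_1'$, both have phase equal to the maximum over all subobjects, so $\phi(M_1)=\phi(M_1')$. Since $M_1$ is semistable of this maximal phase and every graded piece of the HN filtration of $M/M_1'$ has strictly smaller phase, Theorem \ref{homzero}(a) forces $\Hom_\A(M_1,M/M_1')=0$, so the inclusion $M_1\hookrightarrow M$ factors through $M_1'$, giving $M_1\subseteq M_1'$; by symmetry $M_1=M_1'$, and quotienting by $M_1$ and inducting yields $F_i\cong F_i'$ for all $i$. I expect the main obstacle to be the construction of $M_1$ itself, namely attaining the supremum of subobject phases and collecting the equal-phase semistables into a single largest subobject: this is exactly where the Artinian and Noetherian hypotheses and the closure-under-sums fact do the real work, whereas the lifting of filtrations, the strict decrease of phases, and uniqueness are essentially formal once Theorem \ref{homzero} is available.
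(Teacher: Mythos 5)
First, note that the paper does not actually prove this statement: Theorem \ref{HN} is quoted from Rudakov \cite{Ru}, so there is no in-paper argument to compare yours with. Your overall architecture --- build the maximally destabilizing subobject $M_1$, induct on length for existence, and use Theorem \ref{homzero} for uniqueness --- is the standard route, and the parts of your argument downstream of the construction of $M_1$ (lifting the filtration of $M/M_1$, the strict drop $\phi(F_2)<\phi(F_1)$ via the preimage $P$, and the uniqueness induction) are essentially correct, modulo the routine verification that the first term of \emph{any} HN filtration has maximal phase among all subobjects.

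The genuine gap is in the construction of $M_1$ itself, and it has two layers. First, your preliminary lemma is false as stated: if $L_1,L_2\subseteq M$ are semistable subobjects of the same phase $p$, their sum need not be semistable of phase $p$. Concretely, for the Kronecker quiver with the slope $\phi(V)=n_1/n_2$ used in the paper's own example, the preinjective $I_1$ of dimension vector $(2,1)$ contains two distinct regular submodules $R_\lambda$ and $R_\mu$ of dimension vector $(1,1)$; these are stable of slope $1$, but $R_\lambda+R_\mu=I_1$ has slope $2$. (What is true is that $\A_p$ is a wide subcategory, as in Proposition \ref{fix-slope}; it is not closed under subobjects or quotients taken in $\A$, and $L_1+L_2$ is the image of a map whose target $M$ need not lie in $\A_p$.) The statement only becomes correct once $p$ is already known to bound the phases of \emph{all} subobjects of $M$, since then $\phi(L_1+L_2)\geq p$ as a quotient of the semistable object $L_1\oplus L_2$ and $\phi(L_1+L_2)\leq p$ as a subobject of $M$, and every subobject of $L_1+L_2$ has phase at most $p$. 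Second, and as a consequence, your derivation of the existence of $p_{\max}$ from ``closure under sums plus Noetherianity'' does not go through: Noetherianity stabilizes the ascending chain of partial sums of a family $L_1,L_2,\dots$ with strictly increasing phases but yields no contradiction, and the closure-under-sums fact you want to invoke is only available \emph{after} $p_{\max}$ is known to exist, so the argument is circular. Since $\P$ is an arbitrary totally ordered set, the attainment of the maximal phase among subobjects is exactly the nontrivial content of Rudakov's Proposition 1.13 (a see-saw argument juggling sums \emph{and} intersections of subobjects, using both chain conditions), and as it stands your proposal asserts this step rather than proving it.
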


For further use, it is also worthwhile to recall the following weaker version of a result from Rudakov.

\begin{theorem}\cite[Theorem 3]{Ru}\label{JH}
Let $\mathcal{A}$ be an abelian length category with a stability function $\phi:\Obj^*\mathcal{A}\rightarrow \mathcal{P}$, and let $M$ be a $\phi$-semistable object in $\mathcal{A}$.  
There exists a filtration 
$$0=M_0\subsetneq M_1\subsetneq M_2\subsetneq \dots \subsetneq M_{n-1} \subsetneq M_n=M$$
such that 
\begin{enumerate}[label=(\alph*)]
\item the quotients $G_i=M_i/M_{i-1}$ are $\phi$-stable, 
\item $\phi(M)=\phi(G_{n-1})=\dots=\phi(G_2)=\phi(G_1)$.
\end{enumerate}
Moreover, the Jordan-H\"older property holds, in the sense that the set $\{G_i\}$ of factors is uniquely determined up to isomorphism.
\end{theorem}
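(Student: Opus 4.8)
The statement has two parts: the existence of a stable filtration, and the Jordan--Hölder uniqueness of its factors. I would treat them separately. For existence I would argue by induction on the length of $M$, which is finite since $\A$ is a length category. Write $p=\phi(M)$. The key step is to produce a \emph{stable} subobject $M_1\subseteq M$ with $\phi(M_1)=p$: among all nonzero subobjects $L\subseteq M$ with $\phi(L)=p$ --- a set that is nonempty because $M$ itself belongs to it --- choose one of minimal length. If such a minimal $L$ had a proper nonzero subobject $L'\subsetneq L$ with $\phi(L')\geq\phi(L)=p$, then, since $M$ is semistable, $L'\subseteq M$ would force $\phi(L')\leq p$, whence $\phi(L')=p$ with $L'$ of strictly smaller length, contradicting minimality. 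Hence $L$ is stable; set $M_1=L$ and $G_1=M_1$. The see-saw property applied to $0\ra M_1\ra M\ra M/M_1\ra 0$ gives $\phi(M/M_1)=p$, and since every quotient of $M/M_1$ is a quotient of $M$, the quotient characterization of semistability (the remark after Definition \ref{defss}) shows that $M/M_1$ is semistable of phase $p$. As $M/M_1$ has smaller length, the induction hypothesis provides a stable filtration of $M/M_1$ with factors of phase $p$; pulling it back along $M\ra M/M_1$ yields the desired filtration of $M$, terminating when $M_1=M$.

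For uniqueness, the clean framework is the full subcategory $\A_p\subseteq\A$ consisting of the zero object together with all semistable objects of phase exactly $p$. I would first show that $\A_p$ is a \emph{wide} subcategory, i.e. closed under kernels, cokernels and extensions, and hence abelian with the structure inherited from $\A$. The two basic lemmas are: a quotient of a semistable object of phase $p$ that again has phase $p$ is semistable of phase $p$ (its further quotients are quotients of the original, so their phases are $\geq p$); and, dually, a subobject of phase $p$ of a semistable object of phase $p$ is semistable of phase $p$. Given $f\colon M\ra N$ in $\A_p$, the image $\im f$ is simultaneously a quotient of $M$ and a subobject of $N$, so $p\leq\phi(\im f)\leq p$; the two lemmas then place $\ker f$, $\im f$ and $\coker f$ in $\A_p$, the remaining phases being pinned down by see-saw. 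Closure under extensions follows similarly, by splitting a subobject of the middle term against the sub- and quotient terms. Being a length category, $\A_p$ is then an abelian length category.

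Within $\A_p$ the stable objects of phase $p$ are precisely the simple objects. Indeed, if $S$ is stable of phase $p$ then any proper nonzero subobject of $S$ has phase $<p$ by stability and therefore does not lie in $\A_p$, so $S$ is simple in $\A_p$; conversely, if $S$ is $\A_p$-simple then any proper nonzero $S'\subsetneq S$ in $\A$ with $\phi(S')\geq p$ would, being forced by semistability of $S$ to satisfy $\phi(S')=p$, lie in $\A_p$ by the subobject lemma, contradicting simplicity, so $S$ is stable. Consequently the filtration produced above is exactly a composition series of $M$ in $\A_p$, and the classical Jordan--Hölder theorem in the abelian length category $\A_p$ shows that the multiset $\{G_i\}$ is determined up to isomorphism. (Alternatively, one can bypass $\A_p$ and run the usual exchange argument directly: Theorem \ref{homzero} and Corollary \ref{noniso} guarantee that a nonzero map between stable objects of phase $p$ with stable source is a monomorphism, which is exactly the input the Jordan--Hölder induction needs.)

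The main obstacle is the second paragraph: verifying that kernels and cokernels of maps in $\A_p$ remain \emph{semistable}, not merely of the right phase. Everything else is either a see-saw computation or a citation of classical Jordan--Hölder; it is the promotion of ``phase $=p$'' to ``semistable of phase $p$'' for sub- and quotient objects, via the two lemmas, that carries the real content.
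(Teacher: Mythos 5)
The paper does not actually prove this statement: it is imported verbatim from Rudakov as \cite[Theorem 3]{Ru}, so there is no in-paper argument to compare yours against. Your proof is correct and self-contained. The existence half (minimal-length subobject of phase $p$ is stable, then induct on the semistable quotient $M/M_1$) is the standard argument and is sound; the only point worth making explicit is that a simple object of $\A$ is vacuously stable, which grounds the induction. Your uniqueness half, via the wide subcategory $\A_p$ of semistable objects of phase $p$, is also correct, and it is worth noting that you are essentially re-deriving two facts the paper itself establishes later by the same reasoning: Proposition \ref{fix-slope} (that $\A_p$ is wide, hence an abelian length category) and Remark \ref{simplestable} (that the stable objects of phase $p$ are exactly the simples of $\A_p$); once those are in hand, your reduction to the classical Jordan--H\"older theorem in $\A_p$ is immediate. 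Rudakov's own proof of uniqueness instead runs the exchange argument directly, using the Hom-vanishing statements recorded here as Theorem \ref{homzero} and Corollary \ref{noniso} (a nonzero map out of a stable object of the same phase is mono, a nonzero map into one is epi), which is precisely the alternative you sketch in your final parenthesis; the two routes are equivalent in content, but yours has the advantage of packaging the combinatorics into a single citation of classical Jordan--H\"older, at the cost of first verifying that kernels, cokernels and extensions stay semistable --- which, as you correctly identify, is where the real work lies, and which your two sub/quotient lemmas handle properly.
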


\subsection{Torsion pairs}\label{Sect:torsion} 

The concept of torsion pair in an abelian category was first introduced by Dickson in \cite{Dickson1966}, generalising properties of abelian groups of finite rank. 
The definition is the following.

\begin{defi}
Let $\A$ be an abelian category. 
Then the pair $(\T, \F)$ of full subcategories of $\A$ is a \textit{torsion pair} if the following conditions are satisfied:
\begin{itemize}
    \item $\Hom_\A (X,Y)=0$ for all $X \in \T$ and $Y \in \F$;
    \item If $X \in \A$ is such that $\Hom_\A(X,Y)=0$ for all $Y \in \F$ then $X\in \T$;
    \item If $Y \in \A$ is such that $\Hom_\A(X,Y)=0$ for all $X\in \T$ then $Y \in \F$.
\end{itemize}
Given a torsion pair $(\T, \F)$ we say that $\T$ is a torsion class and $\F$ is a torsion free class. 
\end{defi}

It is well-known that a subcategory $\mathcal{T}$ of $\mathcal{A}$ is the torsion class of a torsion pair $(\mathcal{T},\mathcal{F})$ if and only if $\mathcal{T}$ is closed under quotients and extensions. 
Dually, a subcategory $\mathcal{F}$ of $\mathcal{A}$ is the torsion-free class of a torsion pair if and only if $\mathcal{F}$ is closed under subobjects and extensions.
See \cite[Proposition VI.1.4]{AsSS} for more details. 

In this subsection, we show that a stability function $\phi: \Obj^* \A \to \mathcal P$  induces a torsion pair $(\mathcal{T}_p, \mathcal{F}_p)$ in $\mathcal{A}$ for every $p\in\mathcal{P}$ where
$$\mathcal{T}_{p}=\{M\in \Obj^*(\mathcal A): \phi(M')\geq p \text{ where $M'$ is the m.d.q. of }M\} \cup \{0\}$$
$$\mathcal{F}_{p}=\{M\in \Obj^*(\mathcal A): \phi(M'') < p \text{ where $M''$ is the m.d.s. of }M\} \cup \{0\}$$
But before doing so, we need to fix some notation. 

\begin{defi}
Let $\phi : \A \to \P$ be a stability function and let $p \in \P$. 
We define $\A_{\geq p}$ to be
$$\mathcal{A}_{\geq p}:=\{0\} \cup \{M\in {\mathcal A}: M\text{ is $\phi$-semistable and }\phi(M)\geq p\}.$$
we define in a similar way $\A_{\leq p}$, $\A_{> p}$, $\A_{< p}$ and $\A_p$.
\end{defi}

Given a subcategory $\X$ and an object $M$ of $A$ we say that $M$ is \textit{filtered} by $\X$ if there exists a chain of nested subobjects 
$$0=M_0\subsetneq M_1\subsetneq M_2\subsetneq \dots \subsetneq M_{n-1} \subsetneq M_n=M$$
of $M$ such that $M_i / M_{i-1}$ is an object of $\X$.
Moreover we denote $\Filt(\X)$ to be the full subcategory of $\A$ defined as follows.
$$\Filt(\X):= \{M \in \A : M \text{ is filtered by $X$}\}$$

The following proposition not only shows that $\T_p$ is a torsion class, but also gives a series of equivalent characterisations. 

\begin{prop}\label{eqdeftorsion}
Let $\phi: \A\ra \P$ be a stability function and consider the full subcategory $\T_p$ of $\A$ to be 
$$\mathcal{T}_{p}=\{M\in \Obj^*(\mathcal A): \phi(M')\geq p \text{ where $M'$ is the m.d.q. of }M\} \cup \{0\}.$$
Then:
\begin{enumerate}
\item $\T_p$ is a torsion class;
\item $\mathcal{T}_{p}=\text{\emph{Filt}}(\mathcal{A}_{\geq p})$;
\item $\T_p=\text{\emph{Filt}}(\text{\emph{Fac}}\mathcal{A}_{\geq p})$;
\item $\T_p=\{M\in\A \ : \phi(N)\geq p \text{ for every quotient $N$ of $M$}\}$.
\end{enumerate}
\end{prop}

\begin{proof}
\textit{1.} We need to show that $\mathcal{T}_{p}$ is closed under extensions and quotients. 

To show that $\mathcal{T}_{p}$ is closed under extensions, suppose that  
$$0\rightarrow L\overset{f}\rightarrow M\overset{g}\rightarrow N\rightarrow 0$$ is a short exact sequence in $\mathcal{A}$ with $L, N\in\mathcal{T}_{p}$.  
Let $(M',p_M)$ be the maximally destabilising quotient of $M$. 
Then we can construct the following commutative diagram.
$$\xymatrix{
 0\ar[r]& L\ar[r]^f\ar[d] &M\ar[r]^g\ar[d]^{p_M} &N\ar[r]\ar[d] &0 \\
 0\ar[r]& \im (p_Mf)\ar[r]^{f'}\ar[d] & M'\ar[r]^{g'}\ar[d] &\coker f'\ar[r]\ar[d] &0 \\
  & 0 & 0 & 0 & }$$
Let $(L',p_L)$ and $(N',p_N)$ be the maximally destabilising quotients of $L$ and $N$ respectively.

If $\im (p_Mf)=0$, then there exists an epimorphism $h:N\rightarrow M'$, and it follows from the definition of $N'$ that $\phi(M')\geq\phi(N')\geq p$.
Else, it follows from the semistability of $M'$ that $\phi(\im (p_Mf))\leq \phi(M')$. 
Moreover, $\phi(\im(p_Mf))\geq\phi(L')\geq p$ since $L'$ is a maximally destabilising quotient. 
Consequently $\phi(M')\geq p$ and $\mathcal{T}_P$ is closed under extensions.

To show that $\mathcal{T}_p$ is closed under quotients, suppose that $f:M\rightarrow N$ is an epimorphism with $M\in\mathcal{T}_p$. 
Let $(M',p_M)$ and $(N',p_N)$ be the maximally destabilising quotients of $M$ and $N$ respectively. 
Then $p_Nf:M\rightarrow N'$ is an epimorphism and it follows from the definition of $M'$ that $\phi(N')\geq\phi(M')\geq p$.
Hence $N \in \T_p$. 
This proves that $\T_p$ is a torsion class.

\textit{2. and 3.} 
Clearly, $\text{Filt}(\mathcal{A}_{\geq p})\subseteq \text{Filt}(\Fac\mathcal{A}_{\geq p})$. 
On the other hand, it follows from \cite[Proposition 3.3]{DIJ} that $\text{Filt}(\Fac\mathcal{A}_{\geq p})$ is the smallest torsion class containing $\mathcal{A}_{\geq p}$. 
As $\mathcal{A}_{\geq p}\subseteq\mathcal{T}_p$, we get $\text{Filt}(\Fac\mathcal{A}_{\geq p})\subseteq\mathcal{T}_p$. 

It thus remains to show that $\mathcal{T}_p\subseteq \text{Filt}(\mathcal{A}_{\geq p})$. 
Let $M\in\mathcal{T}_{p}$, and let $M'$ be a maximally destabilising quotient of $M$.  
By definition of $\mathcal{T}_p$, we have that $\phi(M')\geq p$. 
Therefore we can consider the Harder-Narasimhan filtration of $M$ and Theorem \ref{HN} implies that $M\in\text{Filt}(\mathcal{A}_{\geq p})$. 
Hence $\T_p\subseteq \text{Filt}(\mathcal{A}_{\geq p})\subseteq\text{Filt}(\Fac(\A_{\geq p}))\subseteq \T_p$.

\textit{4.} 
Let $M\in\mathcal{T}_p$, and suppose that $M'$ is its maximally destabilising quotient.  
By definition of the maximally destabilising quotient, every quotient $N$ of $M$ is such that $\phi(N)\geq\phi(M')\geq p$.  
Thus $\mathcal{T}_p\subseteq \{M\in\A \ : \ \phi(N)\geq p \text{ for every quotient $N$ of $M$}\}$.  
The reverse inclusion is immediate.
\end{proof}

The following result is the dual statement for the torsion-free class $\mathcal{F}_p$.

\begin{prop}\label{eqdeftorsionfree}
Let $\phi: \A\ra \P$ be a stability function and consider the full subcategory $\F_p$ of $\A$ to be $$\mathcal{F}_{p}=\{M\in \Obj^*(\mathcal A): \phi(M'') < p \text{ where $M''$ is the m.d.s. of }M\} \cup \{0\}.$$
 Then:
\begin{enumerate}[label=(\alph*)]
\item $\F_p$ is a torsion free class;
\item $\mathcal{F}_{p}=\text{\emph{Filt}}(\mathcal{A}_{< p})$; 
\item $\F_p=\text{\emph{Filt}}(\text{\emph{Sub}}\mathcal{A}_{< p})$.
\item $\F_p=\{M\in\A \ : \ \phi(N)< p \text{ for every subobject $N$ of $M$}\}$.
\end{enumerate}
\end{prop}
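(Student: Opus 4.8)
The plan is to prove Proposition \ref{eqdeftorsionfree} by dualizing the proof of Proposition \ref{eqdeftorsion}. Conceptually one passes to the opposite category $\A^{\mathrm{op}}$ equipped with the same function $\phi$ but with the order on $\P$ reversed; since the see-saw property in Definition \ref{seesaw} is symmetric this is again a stability function, subobjects in $\A$ become quotients in $\A^{\mathrm{op}}$, maximally destabilizing subobjects become maximally destabilizing quotients (Lemma \ref{MDQunicity}), and the Harder-Narasimhan filtration of Theorem \ref{HN} dualizes. The only point requiring genuine attention is that $\T_p$ is defined with the non-strict inequality $\phi(\cdot)\ge p$ whereas $\F_p$ uses the strict inequality $\phi(\cdot)<p$; because of this asymmetry the duality is not a verbatim mirror, and I would therefore carry out the four steps directly rather than merely invoking $\A^{\mathrm{op}}$.

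For \emph{(a)} I would verify closure under subobjects and extensions, the two conditions characterizing a torsion-free class. Closure under subobjects is immediate: if $i\colon N\ra M$ is a monomorphism with $M\in\F_p$ and $N''$ is the maximally destabilizing subobject of $N$, then the composite $N''\ra M$ is again a monomorphism, so $\phi(N'')\le\phi(M'')<p$ by the defining property of the maximally destabilizing subobject $M''$ of $M$. For closure under extensions, given $0\ra L\ra M\overset{g}{\ra}N\ra 0$ with $L,N\in\F_p$ and $M''$ the maximally destabilizing subobject of $M$, I would form the dual of the commutative diagram used for $\T_p$, relating $M''$, its image $\im(g\,i)\subseteq N$ under $g$, and the kernel $\ker(g\,i)\subseteq L$. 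If $\im(g\,i)=0$ then $M''$ embeds into $L$ and hence $\phi(M'')<p$; otherwise $\im(g\,i)$ is a nonzero quotient of the semistable object $M''$ (Lemma \ref{MDQunicity}(b)), so $\phi(M'')\le\phi(\im(g\,i))$, while $\im(g\,i)$ is a subobject of $N\in\F_p$, giving $\phi(\im(g\,i))<p$; together these force $\phi(M'')<p$.

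For \emph{(d)}, the inclusion $\F_p\subseteq\{M : \phi(N)<p \text{ for every subobject } N \text{ of } M\}$ holds because every subobject $N$ of $M$ embeds into, and therefore has phase at most that of, the maximally destabilizing subobject $M''$; thus $\phi(N)\le\phi(M'')<p$, and the reverse inclusion is immediate. For \emph{(b)} and \emph{(c)}, which by duality read $\F_p=\text{Filt}(\A_{<p})=\text{Filt}(\Sub\A_{<p})$, I would invoke the dual of \cite[Proposition 3.3]{DIJ}, namely that $\text{Filt}(\Sub\A_{<p})$ is the smallest torsion-free class containing $\A_{<p}$; since $\A_{<p}\subseteq\F_p$ this yields $\text{Filt}(\Sub\A_{<p})\subseteq\F_p$. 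For the reverse inclusion $\F_p\subseteq\text{Filt}(\A_{<p})$ I would take $M\in\F_p$ and appeal to Theorem \ref{HN}: the maximally destabilizing subobject of $M$ is the bottom step $F_1=M_1$ of its Harder-Narasimhan filtration, which carries the \emph{largest} phase among the semistable factors $F_i$. As $\phi(F_1)<p$ and $\phi(F_1)>\phi(F_2)>\dots>\phi(F_n)$, every $F_i$ lies in $\A_{<p}$, so $M\in\text{Filt}(\A_{<p})$; chaining the inclusions exactly as for $\T_p$ then closes the loop.

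I expect the main obstacle to be bookkeeping around the strict inequality: one must check that the image and kernel estimates in the extension argument remain \emph{strict} (rather than degrading $<$ to $\le$), and that the dual of \cite[Proposition 3.3]{DIJ} is applied to $\A_{<p}$ with the correct strict cutoff. Everything else is a routine transcription of the arguments already established for $\T_p$.
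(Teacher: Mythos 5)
Your proof is correct and follows exactly the route the paper intends: the paper gives no written proof of Proposition~\ref{eqdeftorsionfree}, presenting it only as ``the dual statement'' of Proposition~\ref{eqdeftorsion}, and your argument is a faithful dualization that correctly keeps the inequalities strict throughout the extension and filtration steps. Two minor remarks. First, in part (c) the paper literally writes $\text{Filt}(\text{Fac}\,\mathcal{A}_{<p})$, which is a typo for $\text{Filt}(\text{Sub}\,\mathcal{A}_{<p})$ --- the version you prove --- since a quotient of a semistable object of phase $<p$ can have maximally destabilizing subobject of phase $\geq p$ and hence lie outside $\F_p$; your reading is the mathematically correct dual. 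Second, in part (d) a subobject $N$ of $M$ need not embed into the maximally destabilizing subobject $M''$, but the inequality $\phi(N)\leq\phi(M'')$ you actually need is immediate from Definition~\ref{defMDQ}(b), so nothing is lost.
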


Now were are able to  prove the main result of this section.

\begin{prop}\label{torsionpair}
Let $p\in\mathcal{P}$.  
Then $(\mathcal{T}_p, \mathcal{F}_p)$ is a torsion pair in $\mathcal{A}$.
\end{prop}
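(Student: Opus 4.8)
The plan is to verify the two defining axioms of a torsion pair directly, leaning on the work already done in Propositions~\ref{eqdeftorsion} and~\ref{eqdeftorsionfree}. Recall that $(\T_p,\F_p)$ is a torsion pair precisely when (i) $\Hom_\A(T,F)=0$ for all $T\in\T_p$ and $F\in\F_p$, and (ii) every object $M$ of $\A$ fits into a short exact sequence $0\to T\to M\to F\to 0$ with $T\in\T_p$ and $F\in\F_p$. We already know from Proposition~\ref{eqdeftorsion} that $\T_p$ is a torsion class and from Proposition~\ref{eqdeftorsionfree} that $\F_p$ is a torsion-free class, so only these two compatibility conditions remain, which makes the statement essentially a corollary of the preceding results.

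For axiom (i), I would argue by contradiction. Suppose $f\colon T\to F$ is a nonzero morphism with $T\in\T_p$ and $F\in\F_p$, and consider its image $\im f$, which is nonzero and factors the map as $T\twoheadrightarrow\im f\hookrightarrow F$. On one hand $\im f$ is a quotient of $T$, so part~(4) of Proposition~\ref{eqdeftorsion} gives $\phi(\im f)\geq p$. On the other hand $\im f$ is a subobject of $F$, so part~(d) of Proposition~\ref{eqdeftorsionfree} gives $\phi(\im f)<p$. This is impossible, hence $f=0$.

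For axiom (ii), the idea is to split the Harder--Narasimhan filtration of $M$ at the phase $p$. Given $0\neq M$, Theorem~\ref{HN} provides a filtration $0=M_0\subsetneq M_1\subsetneq\dots\subsetneq M_n=M$ with semistable quotients $F_i=M_i/M_{i-1}$ satisfying $\phi(F_n)<\dots<\phi(F_1)$. Let $k$ be the largest index with $\phi(F_k)\geq p$, setting $k=0$ if no such index exists. Then $T:=M_k$ carries the induced filtration with successive quotients $F_1,\dots,F_k$, all semistable of phase $\geq p$, so $T\in\text{Filt}(\A_{\geq p})=\T_p$ by part~(2) of Proposition~\ref{eqdeftorsion}; dually $F:=M/M_k$ carries the filtration with quotients $F_{k+1},\dots,F_n$, all semistable of phase $<p$, so $F\in\text{Filt}(\A_{<p})=\F_p$ by part~(b) of Proposition~\ref{eqdeftorsionfree}. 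The canonical sequence $0\to M_k\to M\to M/M_k\to 0$ is then the desired decomposition.

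Since the real content is concentrated in the earlier propositions, I expect no serious obstacle. The only point requiring care is the bookkeeping around the choice of $k$: the strictly decreasing phases $\phi(F_i)$ must make the cut at $p$ unambiguous, and the two extreme cases $k=0$ (when $M\in\F_p$) and $k=n$ (when $M\in\T_p$) must be handled correctly, which is exactly what the Filt-characterizations accommodate because they include the zero object.
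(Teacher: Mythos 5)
Your proof is correct, and the first half (the vanishing of $\Hom_\A(\T_p,\F_p)$ via the image of a putative nonzero morphism) is exactly the paper's argument. Where you genuinely diverge is in the second half: the paper verifies the \emph{maximality} characterization of a torsion pair, showing that $\Hom_\A(\T_p,N)=0$ forces the maximally destabilizing subobject of $N$ to have phase $<p$ and hence $N\in\F_p$ (and dually for $\T_p$), whereas you instead produce the canonical short exact sequence $0\to M_k\to M\to M/M_k\to 0$ by cutting the Harder--Narasimhan filtration at the phase $p$ and invoking the Filt-descriptions from Propositions~\ref{eqdeftorsion} and~\ref{eqdeftorsionfree}. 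Both characterizations of a torsion pair are equivalent in a length category, so either route is legitimate; yours has the merit of exhibiting the torsion subobject of $M$ explicitly as a truncation of the HN filtration (which makes the relation between HN filtrations and the torsion decomposition transparent), while the paper's is slightly more economical in that it only needs the single maximally destabilizing quotient or subobject rather than the full filtration. If your working definition of a torsion pair is the maximality one, you should add the one-line remark that the canonical sequence implies it: given $\Hom_\A(\T_p,N)=0$, the inclusion $T\hookrightarrow N$ from the canonical sequence for $N$ must vanish, so $N\cong F\in\F_p$. Your handling of the boundary cases $k=0$ and $k=n$ is fine since both Filt-classes contain the zero object.
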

\begin{proof}
We first show that $\Hom_{\mathcal{A}}(\mathcal{T}_p, \mathcal{F}_p)=0$.
Suppose that $f\in\Hom_{\A}(M,N)$, where $M\in\T_p$ and $N\in\F_p$.  
Let $M'$ be the maximally destabilising quotient of $M$ and $N'$ be the maximally destabilising subobject of $N$.  
Then $\im f$ is a quotient of $M$ and a subobject of $N$. 
So, if $f\neq 0$, it follows from the definitions of $M'$ and $N'$ that $\phi(\im f)\geq \phi(M')\geq p$ and $\phi(\im f)\leq\phi(N')<p$, a contradiction.  
Thus $f=0$ and $\Hom_{\mathcal{A}}(\mathcal{T}_p, \mathcal{F}_p)=0$.

For the maximality, suppose for instance that $\Hom_\A(\T_p, N)=0$. 
If $N'$ is the maximally destabilising subobject of $N$, it follows that $\Hom_\A(\T_p, N')=0$, and thus $\phi(N')<p$ by definition of $\T_p$. 
Consequently, $N\in\F_p$.  
We show in the same way that $\Hom_\A(M, \F_p)=0$ implies $M\in\T_p$, which proves maximality.
\end{proof}

As a consequence of the previous proposition we have the following result that provides a method to build abelian subcategories of $\A$ using stability conditions. 

\begin{prop}\label{fix-slope}
Let $\phi: \A \to \P$ be a stability function and $p\in \mathcal{P}$ be fixed. Then the full subcategory 
$$\mathcal{A}_{p}=\{0\} \cup \{M\in {\mathcal A}: M\text{ is $\phi$-semistable and }\phi(M)=p\}$$
is a wide subcategory of $\A$.
\end{prop}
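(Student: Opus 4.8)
The plan is to verify directly that $\A_p$ satisfies the three defining closure properties of a wide subcategory: that it is closed under kernels, cokernels, and extensions inside $\A$. Throughout I will use the two equivalent tests for semistability recorded in the remark after Definition \ref{defss}: a nonzero object $X$ with $\phi(X)=p$ lies in $\A_p$ if and only if every subobject of $X$ has phase $\le p$, equivalently if and only if every quotient of $X$ has phase $\ge p$. The guiding observation is that a subobject (resp. quotient) of a semistable object of phase $p$ need not itself be semistable in general, but it becomes so automatically once one knows that its phase is \emph{exactly} $p$; the whole argument therefore reduces to pinning down phases.

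For kernels and cokernels, I would take a morphism $f:M\ra N$ with $M,N\in\A_p$, and may assume $f\neq 0$. First I bound $\phi(\im f)$ from both sides: since $\im f$ is a quotient of the semistable object $M$ we get $\phi(\im f)\ge p$, and since $\im f$ is a subobject of the semistable object $N$ we get $\phi(\im f)\le p$, whence $\phi(\im f)=p$. Applying the see-saw property to $0\ra \ker f\ra M\ra \im f\ra 0$ and to $0\ra \im f\ra N\ra \coker f\ra 0$ (each necessarily in the ``all equal'' branch, since $\phi(M)=\phi(\im f)=\phi(N)=p$) then forces $\phi(\ker f)=\phi(\coker f)=p$. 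It remains to check semistability. Any subobject of $\ker f$ or of $\im f$ is in particular a subobject of $M$ or of $N$ respectively, hence has phase $\le p$, so $\ker f$ and $\im f$ are semistable; dually any quotient of $\coker f$ is a quotient of $N$, hence has phase $\ge p$, so $\coker f$ is semistable. Thus $\ker f,\im f,\coker f\in\A_p$.

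For extensions, I would take a short exact sequence $0\ra L\ra M\ra N\ra 0$ with $L,N\in\A_p$. The see-saw property immediately gives $\phi(M)=p$. To see that $M$ is semistable, let $M'\subseteq M$ and set $K=\ker(M'\ra N)=M'\cap L$; then $K\subseteq L$ gives $\phi(K)\le p$, while $M'/K$ embeds into $N$ and so $\phi(M'/K)\le p$. A final application of see-saw to $0\ra K\ra M'\ra M'/K\ra 0$ (treating the degenerate cases $K=0$ and $K=M'$ separately) yields $\phi(M')\le p$, so $M\in\A_p$.

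Having established these three closures and noting that $\A_p$ is full, it follows that $\A_p$ is an exact abelian subcategory of $\A$ closed under extensions, i.e. a wide subcategory: the kernels, cokernels, and images formed in $\A$ already lie in $\A_p$ and serve as the corresponding constructions there. The only genuinely delicate step is the computation $\phi(\im f)=p$, since this is precisely what upgrades the a priori merely ``$\le p$'' or ``$\ge p$'' sub- and quotient objects to honestly semistable objects of phase $p$; everything else is a routine application of the see-saw property.
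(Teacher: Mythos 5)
Your proof is correct, and the kernel/cokernel part coincides with the paper's argument almost verbatim: both pin down $\phi(\im f)=p$ by viewing $\im f$ as a quotient of $M$ and a subobject of $N$, then use the see-saw property on the two canonical short exact sequences and the fact that subobjects of $\ker f$ (resp.\ quotients of $\coker f$) are subobjects of $M$ (resp.\ quotients of $N$). Where you diverge is closure under extensions. The paper does not argue this directly: it observes that $\A_p=\T_p\cap \text{Filt}(\A_{\leq p})$ and invokes Propositions \ref{eqdeftorsion} and \ref{eqdeftorsionfree} to present $\A_p$ as the intersection of a torsion class and a torsion-free class, each of which is closed under extensions. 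You instead give a self-contained argument: for $0\ra L\ra M\ra N\ra 0$ with $L,N\in\A_p$, you intersect an arbitrary subobject $M'\subseteq M$ with $L$ and apply the see-saw property to $0\ra M'\cap L\ra M'\ra M'/(M'\cap L)\ra 0$, handling the degenerate cases separately. Your route is more elementary and does not depend on the torsion-pair machinery of Section \ref{Sect:torsion} (in particular it would survive being moved before those propositions), at the cost of one extra diagram chase; the paper's route is shorter given what has already been established and makes the structural relationship between $\A_p$ and the torsion pairs $(\T_p,\F_p)$ explicit. Both are complete proofs.
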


\begin{proof}
$\A_p$ is a wide subcategory if it is abelian. 
To show that, we note first that $\A_p = \T_p \cap \text{Filt}(\A_{\leq p})$.
Then Proposition \ref{eqdeftorsion} and its dual impliy that $\A_p$ is the intersection of a torsion class $\T_p$ and a torsion free class $\text{Filt}(\A_{\leq p})$.
This implies in particular that $\A_p$ is closed under extensions.

Now we show that $\mathcal{A}_{p}$ is closed under taking kernels and cokernels.
Let $f:M\to N$ be a morphism in $\mathcal{A}_{p}$.  
If $f$ is zero or an isomorphism, the result follows at once.  
Otherwise, consider the following short exact sequences in $\mathcal A$
$$ 0 \to \ker f \to M \to \im f \to 0$$
$$ 0 \to \im f \to N \to \coker f \to 0$$
where all these objects are non-zero. 
The semistability of $M$ implies $\phi(\im f) \geq \phi(M)=p$, while the semistability of $N$ implies $\phi(\im f)\leq \phi(N)=p$. 
Consequently $\phi(\im f)=p$. 
The see-saw property applied to the two exact sequences yields $\phi(\ker f)=p$ and $\phi(\coker f)=p$. 

Moreover, every subobject $L$ of $\ker f$ is a subobject of $M$, thus $\phi(L)\le \phi(M)=\phi(\ker f)$. 
Therefore $\ker f$ is $\phi$-semistable and belongs to $\mathcal{A}_p$. 
Dually we show that $\coker f$ also belongs to $\mathcal{A}_p$.
This finishes the proof.
\end{proof}

\begin{rmk}\label{simplestable}
It is easy to see that the $\phi$-stable objects with phase $ p$ are exactly the simple objects of the abelian category $\mathcal{A}_p$.
Moreover, the proof establishes again the parts (b) and (c)  of Theorem~\ref{homzero}.
\end{rmk}

\section{Maximal green sequences and stability functions}

In the previous section we discussed how a stability function $\phi: \A \to \P$ induces a torsion pair $(\T_p,\F_p)$ in $\A$ for each phase $p\in\P$. 
Moreover, as noted in \cite[Section 3]{BKT}, it is easy to see that if $p\leq q$ in $\mathcal{P}$, then $\T_p\supseteq\T_q$ and $\F_p\subseteq\F_q$.  
Since $\P$ is totally ordered, every stability function $\phi$ yields a (possibly infinite) chain of torsion classes in $\A$.
In this section we are mainly interested in the different torsion classes induced by $\phi$. 
We therefore define, for a fixed stability function $\phi: \A \to \P$, an equivalence relation on $\mathcal P$ by $p \sim q$ when $\mathcal{T}_p =\mathcal{T}_q$ and consider the equivalence classes $\P/\sim$.  

Of particular importance is the case where the chain of equivalence classes $\P/\sim$ is finite, not refinable, and represented by elements $p_0 > \ldots > p_m \in \P$ such that $\T_{p_0} = \{0\}$ and $\T_{p_m} = \A$:

\begin{defi}
A \textit{maximal green sequence} in $\A$ is a finite sequence of torsion classes $0=\X_0\subsetneq \X_1 \subsetneq \dots \subsetneq \X_{n-1}\subsetneq \X_n=\A$ such that for all $i\in\{1, 2, \dots, n\}$, the existence of a torsion class $\X$ satisfying $\mathcal{X}_i\subseteq\mathcal{X}\subseteq\mathcal{X}_{i+1}$ implies $\mathcal{X}=\mathcal{X}_i$ or $\mathcal{X}=\mathcal{X}_{i+1}$.
\end{defi}

\begin{rmk}
Note that this definition is not the original definition of maximal green sequence given by Keller in \cite{KellerMaximalGreenSequences}.
However the equivalence between both definitions follows directly from \cite[Proposition 4.9]{BSTw&c}.
\end{rmk}

Our aim is to establish conditions when the chain of torsion classes induced by a stability function is a maximal green sequence.
Observe first that if $\phi: \A \to \P$ is a stability function and the totally ordered set $\P$ has a maximal element $\overline{p}$, then $\T_{\overline{p}}$ is the minimal element in the chain of torsion classes induced by $\phi$.  

\begin{lem}\label{supremum}
Let $\phi: \A \to \P$ be a stability function.  
\begin{enumerate}[label=(\alph*)]
\item If $\P$ has a maximal element $\overline{p}$, then $\T_{\overline{p}}\neq \{0\}$ if and only if $\overline{p}\in\phi(\A)$.
\item If the set of equivalence classes $\P/\sim$ is finite and the maximal element of $\P$ does not belong to the image of $\phi$, then there exists some $p\in \P$ different from $\overline{p}$ such that $\T_p=\{0\}$.
\end{enumerate} 
\end{lem}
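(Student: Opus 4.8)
The plan is to prove the two parts separately, using the characterization of $\T_p$ from part (4) of Proposition \ref{eqdeftorsion}, namely that $M \in \T_p$ if and only if $\phi(N) \geq p$ for every quotient $N$ of $M$.

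For part (a), suppose first that $\overline{p}$ is the maximal element of $\P$. The forward direction is the substantive one: assuming $\T_{\overline{p}} \neq \{0\}$, I take a nonzero $M \in \T_{\overline{p}}$ and consider any simple quotient $S$ of $M$ (which exists since $\A$ is a length category). By part (4), $\phi(S) \geq \overline{p}$, but since $\overline{p}$ is maximal we get $\phi(S) = \overline{p}$, so $\overline{p} \in \phi(\A)$. Conversely, if $\overline{p} = \phi(M)$ for some nonzero $M \in \A$, I want to exhibit a nonzero object of $\T_{\overline{p}}$. The natural candidate is a simple object: taking the Harder-Narasimhan filtration and then a Jordan-H\"older refinement, one finds a stable object $S$ with $\phi(S) = \overline{p}$ (here maximality of $\overline{p}$ forces the top HN factor to also have phase $\overline{p}$, and this factor is semistable of maximal phase). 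Since $\overline{p}$ is maximal, every quotient $N$ of $S$ satisfies $\phi(N) \geq \phi(S) = \overline{p}$ by semistability, so $S \in \T_{\overline{p}}$ and $\T_{\overline{p}} \neq \{0\}$. The key point is extracting an object whose quotients all have phase exactly $\overline{p}$; maximality of $\overline{p}$ makes semistable objects of phase $\overline{p}$ automatically lie in $\T_{\overline{p}}$.

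For part (b), the hypothesis is that $\P/\!\sim$ is finite and $\overline{p} \notin \phi(\A)$. The chain of torsion classes $\T_p$ is then finite, so it has a unique minimal member $\T_{\min}$, attained for all $p$ in some top equivalence class. I must show $\T_{\min} = \{0\}$. Suppose not, and pick a nonzero $M \in \T_{\min}$; again take a simple quotient $S$, so $S \in \T_{\min}$ as torsion classes are closed under quotients, and $\phi(S) \geq p$ for every $p$ in the minimal class. The idea is to produce a phase strictly larger than $\phi(S)$ witnessing that $\T_{\min}$ is not actually minimal — or directly to reach a contradiction. Concretely, set $q = \phi(S)$. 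Since $\overline{p} \notin \phi(\A)$ and $\phi(S) = q$, we have $q < \overline{p}$, so there exists $q'$ with $q < q' \leq \overline{p}$. Then $S \notin \T_{q'}$ because $S$ is a quotient of itself with $\phi(S) = q < q'$, so $\T_{q'} \subsetneq \T_{\min}$ would contradict minimality of $\T_{\min}$ unless $\T_{q'}$ equals $\T_{\min}$; but $S \in \T_{\min} \setminus \T_{q'}$ shows $\T_{q'} \neq \T_{\min}$, and since the chain is linearly ordered with $\T_{q'} \subseteq \T_{\min}$, this forces $\T_{q'} \subsetneq \T_{\min}$, contradicting that $\T_{\min}$ is the minimal torsion class.

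The main obstacle is the converse direction of part (a): manufacturing a concrete nonzero object of $\T_{\overline{p}}$ from the mere knowledge that $\overline{p} \in \phi(\A)$. I expect to handle this cleanly by invoking the Harder-Narasimhan filtration (Theorem \ref{HN}) and the Jordan-H\"older-type decomposition (Theorem \ref{JH}) to descend to a stable object of phase $\overline{p}$, then using that maximality of $\overline{p}$ together with semistability forces this object into $\T_{\overline{p}}$ via the quotient characterization. In part (b), the subtlety is ensuring the argument genuinely uses $\overline{p} \notin \phi(\A)$: this hypothesis is exactly what guarantees that no object has phase equal to $\overline{p}$, so there is always strictly more room above any realized phase, which is what lets me strictly shrink the torsion class and contradict minimality.
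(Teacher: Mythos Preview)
Your argument is correct and matches the paper's strategy: extract a quotient of a nonzero object in the minimal torsion class, use the hypothesis on $\overline{p}$ to find a strictly larger phase, and contradict minimality. The paper uses the maximally destabilizing quotient rather than a simple quotient throughout, and in the converse of (a) it observes more directly that any $M$ with $\phi(M)=\overline{p}$ is already semistable (every subobject has phase $\le\overline{p}=\phi(M)$), hence lies in $\A_{\overline{p}}\subseteq\T_{\overline{p}}$, so your detour through the Harder--Narasimhan and Jordan--H\"older filtrations is unnecessary.
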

\begin{proof}
(a) 
Suppose that $\overline{p}$ is a maximal element in $\P$. 
If $\T_{\overline{p}}\neq \{0\}$, then there exists a non-zero object $M$ in $\T_{\overline{p}}$.
If $M'$ is the maximally destabilising quotient of $M$, we know that $\phi(M')\geq \overline{p}$. 
Since $\overline{p}$ is the maximal element of $\P$, we have   $\phi(M')=\overline{p}$ and thus $\overline{p}\in \phi(\A)$.

Conversely, if $\phi(M)=\overline{p}$, then it follows from the maximality of $\overline{p}$ that $\phi(L)\leq\phi(M)=\overline{p}$ for every non-trivial subobject $L$ of $M$. 
Thus $M$ is a $\phi$-semistable object, whence $M\in\A_{\overline{p}}\subset\T_{\overline{p}}$.
\medskip

(b) 
By assumption, the chain of torsion classes induced by $\phi$ is finite, say $$
\T_{p_0}\subsetneq \T_{p_1} \subsetneq \cdots \subsetneq \T_{p_n}.
$$

If $\T_{p_0}\neq\{0\}$, choose a non-zero object $M$ in $\T_{p_0}$.  
Let $M'$ be the maximally destabilising quotient of $M$, thus $M'\in\T_{p_0}$ and $\phi(M')\geq p_0$. 
Since the maximal object of $\P$ does not belong to the image of $\phi$, there exists a $p\in \P$ with $p>\phi(M')$.  
It follows that $M'\notin \T_{p}$, while $\T_p\subseteq \T_{p_0}$, contradicting the minimality of $\T_{p_0}$. 
Thus $\T_{p_0}=\{0\}$.  
\end{proof}

Following Engenhorst \cite{E14}, we call a stability function $\phi : \A \rightarrow \mathcal{P}$ \textit{discrete at $p$} if two $\phi$-stable objects $M_1, M_2$ satisfy $\phi(M_1)=\phi(M_2)=p$ precisely when $M_1$ and $M_2$ are isomorphic in $\A$. 
Moreover, we say that $\phi$ is \textit{discrete} if $\phi$ is discrete at $p$ for every $p\in\P$. 

\begin{prop}\label{mutation}
Let $\phi: \A \to \P$ be a stability function and $p,q\in\P$ such that $\T_p\subsetneq\T_q$.  
Then the  following statements are equivalent:
\begin{enumerate}[label=(\alph*)]
\item There is no $r\in\P$ such that $\T_p\subsetneq\T_r\subsetneq\T_q$, and $\phi$ is discrete at every $q'$ with $q'\sim q$.
\item There is no torsion class $\T$ such that $\T_p\subsetneq\T\subsetneq\T_q$. 
\end{enumerate}
\end{prop}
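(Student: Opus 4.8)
The plan is to prove the two implications separately, working throughout with the torsion pair $(\T_p,\F_p)$ of Proposition \ref{torsionpair} and the descriptions $\T_q=\mathrm{Filt}(\A_{\geq q})$ and $\F_p=\mathrm{Filt}(\A_{<p})$ from Propositions \ref{eqdeftorsion} and \ref{eqdeftorsionfree}. The object that controls the interval between $\T_p$ and $\T_q$ is the subcategory $\mathcal{W}:=\T_q\cap\F_p$; by the Harder--Narasimhan filtration (Theorem \ref{HN}) an object lies in $\mathcal{W}$ if and only if all its HN factors are semistable of phase in the interval $[q,p)$. Note also that $\T_p\subsetneq\T_q$ forces any $q'\sim q$ to satisfy $q'<p$, since otherwise $\T_{q'}\subseteq\T_p$.

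For the implication (b) $\Rightarrow$ (a), the first clause of (a) is immediate, as each $\T_r$ is a torsion class and so (b) already forbids $\T_p\subsetneq\T_r\subsetneq\T_q$. For discreteness I argue contrapositively: suppose $\phi$ fails to be discrete at some $q'\sim q$, witnessed by non-isomorphic stable objects $M_1,M_2$ of phase $q'$. Both lie in $\T_{q'}=\T_q$ (as $\A_{q'}\subseteq\T_{q'}$) and in $\F_p$ (their maximally destabilizing subobject is themselves, of phase $q'<p$). I then set $\T:=\mathrm{Filt}(\Fac(\T_p\cup\{M_1\}))$, the smallest torsion class containing $\T_p$ and $M_1$ by \cite[Proposition 3.3]{DIJ}. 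It contains $M_1\notin\T_p$, so $\T_p\subsetneq\T$, and it is contained in $\T_q$. Finally Corollary \ref{noniso} gives $\Hom_\A(M_1,M_2)=0$, while $\Hom_\A(\T_p,M_2)=0$ because $M_2\in\F_p$; since $\Hom_\A(-,M_2)$ is left exact these vanishings propagate along quotients and extensions to all of $\mathrm{Filt}(\Fac(\T_p\cup\{M_1\}))$, whence $\Hom_\A(\T,M_2)=0$ and $M_2\notin\T$. Thus $\T\subsetneq\T_q$, contradicting (b).

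For (a) $\Rightarrow$ (b), I first use the absence of intermediate $\T_r$ to collapse the window. If two stable objects had distinct phases $s_1<s_2$ in $[q,p)$, then any $r$ with $s_1<r\leq s_2$ would give $\T_p\subsetneq\T_r\subsetneq\T_q$; hence all stable objects of phase in $[q,p)$ share a single phase $s_0$, which exists because the maximally destabilizing quotient of any $M\in\T_q\setminus\T_p$ is semistable of phase in $[q,p)$. One checks $s_0\sim q$. Discreteness at $s_0$ then yields a unique stable object $S$ of phase $s_0$, and Theorem \ref{JH} with Remark \ref{simplestable} identifies $\mathcal{W}=\mathrm{Filt}(S)$. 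Now let $\T$ be any torsion class with $\T_p\subseteq\T\subseteq\T_q$, and decompose each $M\in\T_q$ by its $\T_p$-torsion radical as $0\to t_p(M)\to M\to\overline{M}\to 0$ with $t_p(M)\in\T_p$ and $\overline{M}\in\T_q\cap\F_p=\mathcal{W}=\mathrm{Filt}(S)$. A dichotomy finishes the argument: if $S\in\T$ then every such $M$ lies in $\T$ (as an extension of $\overline{M}\in\mathrm{Filt}(S)\subseteq\T$ by $t_p(M)\in\T_p\subseteq\T$), forcing $\T=\T_q$; if $S\notin\T$ then for $M\in\T$ the quotient $\overline{M}$ lies in $\T$ and, being filtered by $S$, would surject onto $S$ unless it is zero, so $\overline{M}=0$ and $M\in\T_p$, forcing $\T=\T_p$. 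Either way nothing lies strictly between.

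I expect the main obstacle to be the structural analysis in (a) $\Rightarrow$ (b): isolating $\mathcal{W}=\T_q\cap\F_p$ and proving it collapses to $\mathrm{Filt}(S)$, which is precisely where the single-phase reduction, the Jordan--H\"older theorem and the discreteness hypothesis all combine. By contrast the torsion-radical dichotomy and the $\Hom$-vanishing construction in (b) $\Rightarrow$ (a) are routine once one has the torsion-pair property and left-exactness of $\Hom$.
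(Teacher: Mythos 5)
Your proof is correct, but it takes a genuinely different route from the paper's in both directions, and in the direction (b) $\Rightarrow$ (a) it is substantially shorter. There, the paper's witness for non-discreteness is $\text{Filt}(\A_{\geq p}\cup\{N\})$, and more than half of the published proof is devoted to verifying by hand that this class is closed under quotients (via Harder--Narasimhan filtrations and the wide subcategory $\A_{q}$ of Proposition~\ref{fix-slope}); your choice of the torsion closure $\text{Filt}(\Fac(\T_p\cup\{M_1\}))$ makes the torsion-class property automatic by the same result of \cite{DIJ} that the paper already invokes in Proposition~\ref{eqdeftorsion}, so the only thing left to check is the exclusion of $M_2$, which both proofs obtain from Corollary~\ref{noniso} together with $\Hom$-orthogonality against $\F_p$. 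In the direction (a) $\Rightarrow$ (b) the two arguments rest on the same underlying mechanism --- a unique stable object $S$ of a single phase $s_0\sim q$ controls the gap, and whether $S$ lies in an intermediate torsion class $\T$ decides whether $\T=\T_q$ or $\T=\T_p$ --- but the paper runs an element-by-element argument (pick $M\in\T\setminus\T_p$, pass to its maximally destabilizing quotient, produce the stable factor $M''$, then show $\A_{\geq\phi(M'')}\subseteq\T$), and only needs the half of the dichotomy asserting that $\T\supsetneq\T_p$ forces $\T=\T_q$, whereas you first isolate the interval category $\T_q\cap\F_p=\text{Filt}(S)$ and then apply the canonical torsion-radical sequence, obtaining both halves symmetrically at the cost of a preliminary structural lemma. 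All of your individual steps check out: the single-phase reduction (a second phase $s_2\in[q,p)$ would give the intermediate class $\T_{s_2}$), the existence of $s_0$ via maximally destabilizing quotients and Theorem~\ref{JH}, the verification that $s_0\sim q$, and the observation that a nonzero object filtered by $S$ surjects onto $S$.
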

\begin{proof}
(a) implies (b): 
Suppose that $\T$ is a torsion class such that $\T_{p}\subsetneq\T\subseteq \T_{q}$. 
Then there exists an object $M\in\T\setminus\T_{p}$. 
Let $M'$ be the maximally destabilising quotient of $M$. 
Then $\phi(M')\geq q$ because $M\in \mathcal{T}\subsetneq\mathcal{T}_{q}$.  
Consequently, $\T_{\phi(M')}\subseteq\T_q$.  
On the other hand, $\phi(M')<p$ because $M\not\in \mathcal{T}_{p}$. 
Moreover, since $M'$ is $\phi$-semistable by Theorem~\ref{HN}, $M'\in\T_ {\phi(M')}\setminus\T_p$.  
Consequently, $\T_p\subsetneq \T_{\phi(M')}\subseteq \T_q$.
It thus follows from our assumption that $\T_{\phi(M')}=\T_q$. 

Now, Theorem~\ref{JH} implies the existence of a $\phi$-stable object $M''$ such that $\phi(M'')=\phi(M')$, which is unique since $\phi$ is discrete.  
Using Theorem~\ref{JH} again, $M'$ can be filtered by $M''$. 
In particular $M''$ is a quotient of $M$, and thus $M''\in\T$. 

Consider a $\phi$-stable object $X$ in $\A_{\geq \phi(M'')}$. 
In particular $X\in\T_{\phi(M'')}=\T_q$.  
If $\phi(X)=\phi(M'')$, then $X$ is isomorphic to $M''$ by the discreteness, and $X\in \T$.  
Else $\phi(X)>\phi(M'')$, and $M''\in\T_{\phi(M'')}\setminus\T_{\phi(X)}$. 
Therefore, $\T_{\phi(X)}\subsetneq \T_{\phi(M'')}=\T_q$, which implies by assumption, that $\T_{\phi(X)}\subseteq \T_p\subseteq \T$. 
In particular, $X\in\T$.  
Since $\T$ is a torsion class, this implies that $\A_{\geq \phi(M'')}\subseteq \T$, and furthermore $$\T_q=\T_{\phi(M'')}=\text{Filt}(\A_{\geq\phi(M'')})\subseteq \T.$$  
This shows $\T_q=\T$.

\bigskip
(b) implies (a): 
The fact that there is no $r\in\P$ such that $\T_p\subsetneq\T_r\subsetneq\T_q$ is immediate.  
To show that $\phi$ is discrete, assume that there exist two non-isomorphic $\phi$-stable objects $M$ and $N$ such that $\phi(M)=\phi(N)=q'$, with $q'\sim q$. 
Consider the set $\T=\text{Filt}(\A_{\geq q} \setminus \{M\})$.
We will show that $\T$ is a torsion class such that $\T_p\subsetneq \T\subsetneq \T_q$, a contradiction to our hypothesis.

First, because $\T_p\subsetneq\T_q=\T_{q'}$, we have $q<p$. Since $\T_p=\text{Filt}(\A_{\geq p})$, we have $N\notin \T_p$, so $\T_p\subsetneq \T$.  
Furthermore, $M\notin \T$ by hypothesis.  
Since $M\in\T_q$, this shows $\T\subsetneq\T_q$.  
Thus $\T_p\subsetneq\T\subsetneq\T_q$.    

We now show that $\T=\text{Filt}(\A_{\geq p}\cup \{N\})$ is a torsion class, that is, $\T$ is closed under extensions and quotients. By definition, $\T$ is closed under extensions.  
To show that $\T$ is closed under quotients, suppose that $$T\rightarrow T'\rightarrow 0$$ is an exact sequence in $\A$ and $T\in\T$.  

If $T\in\T_p$, then $T'\in\T_p$ since $\T_p$ is a torsion class and therefore $T'\in\T$.

Else, $T\in\T\setminus\T_p$.  
Let $Q$ be the maximally destabilising quotient of $T$. Since $\T\notin\T_p$, we have $\phi(Q)<p$.  
Moreover, $\phi(Q)\geq q$ since $T\in\T\subsetneq\T_q$.  Consequently, $q\leq\phi(Q)<p$, and it follows from our hypothesis that $\phi(Q)=q$ (otherwise $\T_p\subsetneq\T_{\phi(Q)}\subsetneq \T_q$).  
So $Q\in\T_q=\T_{q'}$. This shows in particular that $q=q'$.
Indeed, if $q<q'$, then the fact that $Q$ is $\phi$-semistable leads to $Q\in\T_q\notin\T_{q'}$, a contradiction.  
Similarly, if $q'<q$, then $N\in\T_{q'}\notin\T_q$, again a contradiction.  So $q=q'$, and consequently $Q,N\in\A_q$.

Now, suppose that
$$
0=T_0\subsetneq T_1\subsetneq T_2\subsetneq \cdots \subsetneq T_{n-1}\subsetneq T_n=T
$$
is a Harder-Narasimhan filtration of $T$, as in Theorem~\ref{HN}.  
In particular, $Q\cong T/T_{n-1}$ and $$q=\phi(Q)<\phi(T_{n-1}/T_{n-2}) <\cdots< \phi(T_{2}/T_{1})<\phi(T_1/T_0).$$  
Consequently, $T_i/T_{i-1}\in\A_p$ for all $i\leq n-1$, while $\phi(Q)=q$.  
Now, $Q$ is a $\phi$-semistable object since is the maximally destabilising quotient of $T$.
Therefore Theorem \ref{JH} implies that $Q \in \Filt(\{M,N\})$. 
But, at the same time we have by hypothesis that $M$ is not a composition factor of $T$. 
In particular $M$ is not a composition factor of $Q$, which implies that $Q \in \Filt(\{N\}) \subset \T$.

Now, let $Q'$ be the maximally destabilising quotient of $T'$. 
Since $Q$ is the maximally destabilising quotient of $T$, we have $\phi(Q')\geq \phi(Q)$.  
If $\phi(Q')>\phi(Q)$, then $\phi(Q')\geq p$, and $T'\in\T_p\subsetneq \T$.  
Else, $\phi(Q')=\phi(Q)$, and it follows from the fact that $Q$ is the maximally destabilising quotient of $T$ that the epimorphism from $T$ to $Q'$ factors through $Q$, and thus there exists an epimorphism $f:Q\rightarrow Q'$ in $\A$, and thus in $\A_q$.

Recall from  Proposition~\ref{fix-slope} that $\A_q$ is an abelian category whose $\phi$-stable objects coincide with the simple objects by Remark~\ref{simplestable}.  
Consequently, it follows from the existence of the epimorphism $f:Q\rightarrow Q'$ and the fact that $Q$ is filtered by the $\phi$-stable object $N$ that $Q'\in\text{Filt}(\{N\})$.  

Let
$$
0=T'_0\subsetneq T'_1\subsetneq T'_2\subsetneq \cdots \subsetneq T'_{m-1}\subsetneq T'_m=T'
$$
be the Harder-Narasimhan filtration of $T'$. 
Then $Q'\cong T'/T'_{m-1}$ and $$q=\phi(Q')<\phi(T'_{m-1}/T'_{m-2}) <\cdots< \phi(T'_{2}/T'_{1})<\phi(T'_1/T'_0).$$
Consequently, $T'_i/T'_{i-1}\in\A_p$. 
Since $Q'$ is filtered by $N$, this implies that $T'\in\text{Filt}(\A_p\cup\{N\})=\T$. 
This finishes the proof.
\end{proof}


We are now able to characterize the stability functions inducing maximal green sequences in $\A$.

\begin{theorem}\label{maximalgreensequences}
Let $\phi:\A\ra \P$ be a stability function. 
Suppose that $\P$ has no maximal element, or that the maximal element of $\P$ is not in $\phi(\A)$. 
Then $\phi$ induces a maximal green sequence if and only if $\phi$ is a discrete stability function inducing finitely many equivalent classes on $\P/\sim$.
\end{theorem}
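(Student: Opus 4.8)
The plan is to run both implications through Proposition~\ref{mutation}, which converts the non-refinability of a step into discreteness together with the absence of an intermediate class, and to use Lemma~\ref{supremum} to anchor the bottom of the chain. Write $\{0\}=\T^{(0)}\subsetneq\T^{(1)}\subsetneq\cdots\subsetneq\T^{(n)}$ for the distinct torsion classes occurring among the $\T_p$; by definition of $\sim$ these are in bijection with $\P/\sim$, which is finite in both directions (assumed in one, derived from finiteness of the sequence in the other). I first record the two endpoint facts. For the top, given any $M\in\A$ its maximally destabilizing quotient $M'$ is semistable by Theorem~\ref{HN}, so $M'\in\A_{\phi(M')}\subseteq\T_{\phi(M')}$ and $M\in\T_{\phi(M')}$ since $\phi(M')\geq\phi(M')$; hence $\A=\bigcup_{p}\T_p$, and as the finite chain is totally ordered by inclusion this union is the largest class, giving $\T^{(n)}=\A$. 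For the bottom, the hypothesis on $\P$ enters: if $\P$ has a maximal element $\overline p\notin\phi(\A)$ then $\T_{\overline p}=\{0\}$ by Lemma~\ref{supremum}(a), while if $\P$ has no maximal element Lemma~\ref{supremum}(b) again produces some $p$ with $\T_p=\{0\}$; in either case $\T^{(0)}=\{0\}$.

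For the backward implication, suppose $\phi$ is discrete and $\P/\sim$ is finite. By the endpoint facts the chain $\T^{(0)}\subsetneq\cdots\subsetneq\T^{(n)}$ is finite and runs from $\{0\}$ to $\A$, so it only remains to see each step is non-refinable. Fix $j$ and choose $p,q$ with $\T_p=\T^{(j-1)}$ and $\T_q=\T^{(j)}$, so $\T_p\subsetneq\T_q$. Consecutiveness of these classes in the finite chain means there is no $r$ with $\T_p\subsetneq\T_r\subsetneq\T_q$, and discreteness of $\phi$ gives discreteness at every $q'\sim q$; thus condition~(a) of Proposition~\ref{mutation} holds, whence condition~(b): no torsion class lies strictly between $\T^{(j-1)}$ and $\T^{(j)}$. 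Therefore the chain is a maximal green sequence.

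For the forward implication, suppose the chain of torsion classes induced by $\phi$ is a maximal green sequence. Then it is finite, so $\P/\sim$ is finite. To prove discreteness, fix $p_0\in\P$. If $\T_{p_0}=\{0\}$ then $\A_{p_0}=\{0\}$, so there is no nonzero semistable---in particular no stable---object of phase $p_0$, and discreteness at $p_0$ is vacuous. Otherwise $\T_{p_0}=\T^{(j)}$ for some $j\geq 1$; taking $p$ with $\T_p=\T^{(j-1)}$, the step $\T_p\subsetneq\T_{p_0}$ is a non-refinable step of the maximal green sequence, so condition~(b) of Proposition~\ref{mutation} holds, hence condition~(a), which asserts in particular that $\phi$ is discrete at every $q'\sim p_0$, and so at $p_0$. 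Since $p_0$ was arbitrary, $\phi$ is discrete.

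I expect the only delicate point to be the treatment of the two endpoints. Identifying $\T^{(n)}=\A$ must rest on finiteness of the chain rather than any maximality property in $\P$, and fixing $\T^{(0)}=\{0\}$ is precisely where the hypothesis on the maximal element of $\P$ is needed, via Lemma~\ref{supremum}. The subtlest observation is that discreteness at the phases of the bottom class $\T^{(0)}=\{0\}$ must be obtained separately and for free---there are no stable objects of such phase---since Proposition~\ref{mutation} only yields discreteness at phases realizing the \emph{upper} class of a step.
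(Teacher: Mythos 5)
Your proof is correct and follows essentially the same route as the paper: both implications reduce to Proposition~\ref{mutation}, the bottom of the chain is anchored by Lemma~\ref{supremum}, and the top class is identified with $\A$ via the maximally destabilizing quotient. Your explicit handling of discreteness at phases realizing the zero torsion class (where Proposition~\ref{mutation} does not apply but discreteness holds vacuously since $\A_{p_0}=\{0\}$) is a small gap in the paper's own proof that you correctly fill, without changing the overall argument.
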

\begin{proof}
Suppose that $\phi$ induces a maximal green sequence, say 
$$
\{0\}=\T_{p_0}\subsetneq \T_{p_1} \subsetneq \cdots \subsetneq \T_{p_n}=\A.
$$
In particular, there are only finitely many equivalence classes in $\P/\sim$.
Moreover, it follows from Proposition~\ref{mutation} that $\phi$ is discrete.

Conversely, suppose that $\phi$ is a discrete stability function inducing finitely many equivalent classes on $\P/\sim$.  
So we get a (complete) chain of torsion classes
$$
\T_{p_0}\subsetneq \T_{p_1} \subsetneq \cdots \subsetneq \T_{p_n}.
$$
induced by $\phi$.
The discreteness of $\phi$ implies by Proposition~\ref{mutation} that this chain of torsion classes is maximal. 
Moreover, it follows from Lemma~\ref{supremum} that $\T_{p_0}=\{0\}$. It remains to show that $\T_{p_n}=\A$. 
If $M\in\A$ but $M\notin\T_{p_n}$, then the maximally destabilising quotient $M'$ of $M$ satisfies $\phi(M')<p_n$.
Since $M'\in \T_{\phi(M')}$, it follows that $\T_{p_n}\subsetneq \T_{\phi(M')}$, a contradiction to the maximality of $\T_{p_n}$. 
So $\T_{p_n}=\A$. 
\end{proof}

As an immediate corollary we have the following result, which is of importance for the study of the representation theory of the so-called $\tau$-tilting finite algebras.

\begin{cor}
Let $\A$ be an abelian category having only finitely many torsion classes. 
Then every discrete stability function $\phi: \Obj^*(\A) \to \P$ induces a maximal green sequence. 
\end{cor}

\begin{ex}
We illustrate by the following example that non-linear stability functions allow sometimes to describe all torsion classes, which would not have been possible using linear stability conditions. Consider the Kronecker quiver

$$\xymatrix{
 Q: \quad 1 \ar@<-.5ex>[r] \ar@<.5ex>[r] & 2
}$$
It is well-known that the indecomposable representations of $Q$ are parametrized by two discrete families $P_n$ and $I_n$, for $n \in \N$, of dimension vectors $(n, n+1)$ and $(n+1,n)$, respectively, together with a $\mP_1(k)-$family of representations $R_{\lambda,n}$ of dimension vector $(n,n)$, with $\lambda \in \mP_1(k), n\in \N$, for an algebraically closed field $k$.

We order the indecomposables by their slope

$$\phi(V) = \mbox{$\frac{n_1}{n_2}$}\; \mbox{ if dim }V = (n_1,n_2)$$
and thus obtain a stability function
$$\phi: {\rm rep }\; Q \to \R \cup \{ \infty \}.$$
It is known that one obtains all functorially finite torsion classes of rep $Q$ in the form $\T_p$ for some $p \in \R\cup \{ \infty \}$.
For more details on this, see \cite{AIR, DIJ}.

However, there are lots of torsion classes for rep $Q$ that are not functorially finite, they are given by selection of indecomposables as follows:
Let $S$ be any subset of $\mP_1(k)$, then the additive hull of all indecomposables $R_{\lambda,n}$ and $I_n$, for $n \in \N$ and $\lambda \in S$, forms a torsion class which we denote by $\T_S$. Every not functorially finite torsion class of rep $Q$ is of this form for some set $S$, and we can certainly not obtain these classes by a linear stability function since the elements $R_{\lambda,n}$ where $\lambda$ is in $S$ share the same dimension vector with those where $\lambda$ does not lie in $S$. 

We therefore define a set $\P = \R \cup \{ \infty \} \cup \{1^*\}$ where we add a new element, $1^*$, as a double of $1$, at the same order relative to the other elements $x \neq 1$, but we agree on setting $ 1^* < 1$. 
Thus $\P$ is totally ordered, and we define a stability function 
$$\phi^*: {\rm rep }\; Q \to \P$$
by the following values on the indecomposables:
\begin{equation*}
  \phi^*(V) =\left\{
  \begin{array}{@{}ll@{}}
    \mbox{$\frac{n_1}{n_2}$}\;  & \text{ if dim } V = (n_1,n_2) \text{ and } n_1 \neq n_2\\
    1 & \text{ if } V = R_{\lambda,n} \text{ and } \lambda \in S 
   \\
  1^* & \text{ if } V = R_{\lambda,n} \text{ and } \lambda \not\in S 
  \end{array}\right.
\end{equation*} 
Using this setting, one obtains the torsion class $\T_S$ as $\T_1$ with respect to the element $p=1 \in \P$.
\end{ex}

\section{Paths in the wall and chamber structure}\label{Sc:MGSmodcat}

In this section we focus on  abelian length categories $\A$ with finitely many simple objects, that is $rk(K_0(\A))=n$ for some $n \in \N$. We provide a construction of stability functions on $\A$  that conjecturally induce all its maximal green sequences.
These stability functions are induced by certain curves, called \textit{red paths}, in the wall and chamber structure of $\A$, described in \cite{BSTw&c} when $\A$ is the module category of an algebra.
In particular we show that red paths give a non-trivial compatibility between the stability conditions introduced by King in \cite{Ki} and the stability functions introduced by Rudakov in \cite{Ru}.
As a consequence, we show that the wall and chamber structure of an algebra can be recovered using red paths.



\subsection{The wall and chamber structure of an abelian category}\label{wallandchamber}

One of the main motivations of Rudakov to introduce stability functions was to generalise the stability conditions introduced by King in \cite{Ki}.
The definition of stability conditions given by King is the following.

\begin{defi}\cite[Definition 1.1]{Ki}\label{stability}
Let $\theta$ be a vector of $\mathbb{R}^n$ and $M$ be an object in $\A$.
Then $M$ is called \textit{$\theta$-stable} (or \textit{$\theta$-semistable}) if $\langle \theta, [M]\rangle=0$ and $\langle \theta, [L]\rangle<0$ ($\langle \theta, [L]\rangle\leq 0$, respectively) for every proper subobject $L$ of $M$. 
\end{defi}

One can see Definition \ref{stability} as a forking path:
either one fixes a vector $\theta$ and studies the category of $\theta$-semistable objects, or one fixes an object $M$ and studies the vectors $\theta$ turning $M$ $\theta$-semistable. 
The wall and chamber structure of $\A$ is defined taking the second option.

\begin{defi}
The \textit{stability space} of an object $M$ of $\A$ is $$\mathfrak{D}(M)=\{\theta\in\mathbb{R}^n : M \text{ is $\theta$-semistable}\}.$$
Moreover the stability space $\mathfrak{D}(M)$ of $M$ is said to be a \textit{wall} when $\mathfrak{D}(M)$ has codimension one. 
In this case we say that $\mathfrak{D}(M)$ is the wall defined by $M$.
\end{defi}

Note that not every $\theta \in \mathbb{R}^n$ belongs to the stability space $\mathfrak{D}(M)$ for some non-zero object $M$. For instance, is easy to see that $\theta=(1,1, \dots, 1)$
is an example of such a vector for every  $\A$. 
This leads to the following definition.

\begin{defi}
Let $\A$ be an abelian length category such that $rk(K_0(\A))=n$ and
$$\mathfrak{R}=\mathbb{R}^n\setminus\overline{\bigcup\limits_{\substack{ 0 \neq M\in \A}}\mathfrak{D}(M)}$$
be the maximal open set of all $\theta$ having no $\theta$-semistable objects other than the zero object. 
Then an $n-$dimensionional  connected component $\mathfrak{C}$ of $\mathfrak{R}$ is called a \textit{chamber} and this partition of $\mathbb{R}^{n}$ is known as the \textit{wall and chamber structure} of $\A$.
\end{defi}

\subsection{Red paths}

Let $\A$ be an abelian length category of rank $n$ as before, and let $\gamma:[0,1]\rightarrow \mathbb{R}^n$ be a continuous function such that $\gamma(0)=(1,\dots, 1)$ and $\gamma(1)=(-1,\dots,-1)$.
If we fix an object $M$ in $\A$, $\gamma(t)$ induces a continuous function $\rho_M:[0,1]\rightarrow \mathbb{R}$ defined as $\rho_M(t)=\langle \gamma(t) , [M] \rangle$. 
Note that $\rho_M(0)>0$ and $\rho_M(1)<0$. 
Therefore, for every object there is at least one $t\in(0,1)$ such that $\rho_M(t)=0$. 
This leads to the following definition of red paths:

\begin{defi}\label{greenpaths}
A continuous function $\gamma: [0,1]\ra\mathbb{R}^n$ in the wall and chamber structure of $\A$ is a \textit{red path} if the following conditions hold:
\begin{itemize}
\item $\gamma(0)=(1,\dots, 1)$;
\item $\gamma(1)=(-1,\dots,-1)$;
\item for every non-zero object $M$ there is a unique $t_M \in [0,1]$ such that $\rho_M(t_M)=0$.
\end{itemize}
\end{defi}

\begin{rmk}
In \cite[Section 4]{BSTw&c} the notion of $\D$-generic paths in the wall and chamber of an algebra $A$ is studied. 
In particular, every wall crossing of a $\D$-generic path is either green or red. 
We use the name \textit{red} paths here because every wall crossing is red in the sense of \cite{BSTw&c}.
\end{rmk}

\begin{rmk}
Note that, by definition, red paths can pass through the intersection of walls, which is not allowed in the definition of Bridgeland's \textit{$\D$-generic paths} (see \cite[Definition 2.7]{B16}) nor Engenhorst's  \textit{discrete paths} (see \cite{E14}).
\end{rmk}

\bigskip
Another key difference between the red paths and the other paths cited above is that red paths take account of crossing of all hyperplanes, not only  the walls. In the next proposition we show that we can recover the information of  crossings from the stability structure induced by the path. 

\begin{lem}\label{darkside}
Let $\gamma$ be a red path. Then $\langle \gamma(t), [M] \rangle<0$ if and only if $t>t_M$.
\end{lem}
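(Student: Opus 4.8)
The plan is to work entirely with the real-valued continuous function $\rho_M(t)=\langle \gamma(t),[M]\rangle$ and to combine the intermediate value theorem with the uniqueness of its zero $t_M$. I would begin by recalling the two boundary observations already made before Definition~\ref{greenpaths}, namely $\rho_M(0)>0$ and $\rho_M(1)<0$ (which hold because the class of a nonzero object of a length category is a nonnegative, nonzero combination of the simples). Since neither endpoint value is zero, the uniqueness of $t_M$ forces $t_M\in(0,1)$, so that the two subintervals $[0,t_M)$ and $(t_M,1]$ are both nonempty.

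Next I would establish the sign of $\rho_M$ on each side of $t_M$. On $[0,t_M)$ the function $\rho_M$ never vanishes, as $t_M$ is its only zero. If $\rho_M(s)<0$ for some $s\in[0,t_M)$, then, since $\rho_M(0)>0$, the intermediate value theorem would produce a zero of $\rho_M$ in the open interval $(0,s)\subset[0,t_M)$, contradicting the uniqueness of $t_M$. Hence $\rho_M(t)>0$ for every $t\in[0,t_M)$. Dually, on $(t_M,1]$ the function $\rho_M$ never vanishes, and if $\rho_M(s)>0$ for some $s\in(t_M,1]$, then, since $\rho_M(1)<0$, the intermediate value theorem would give a zero in $(s,1)\subset(t_M,1]$, again contradicting uniqueness. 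Hence $\rho_M(t)<0$ for every $t\in(t_M,1]$.

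Combining these with $\rho_M(t_M)=0$ yields the trichotomy $\rho_M(t)>0$ for $t<t_M$, $\rho_M(t_M)=0$, and $\rho_M(t)<0$ for $t>t_M$. In particular $\rho_M(t)<0$ holds precisely when $t>t_M$, which is exactly the assertion $\langle\gamma(t),[M]\rangle<0\iff t>t_M$. There is no serious obstacle here; the entire argument rests on the intermediate value theorem, and the only point requiring care is to invoke the uniqueness of the zero $t_M$, guaranteed by the defining property of a red path, in order to rule out any sign change away from $t_M$.
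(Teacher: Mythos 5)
Your argument is correct and is precisely the intermediate value theorem argument that the paper leaves implicit: its proof simply says the lemma "is a direct consequence of the definition of red path and the fact that $\rho_M$ is continuous," relying on exactly the boundary signs $\rho_M(0)>0$, $\rho_M(1)<0$ and the uniqueness of the zero $t_M$ that you spell out. No difference in approach, only in the level of detail.
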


\begin{proof}
This is a direct consequence of the definition of red path and the fact that the function $\rho_M$ induced by $\gamma$ and $M$ is continuous. 
\end{proof}

The following result shows that each red path $\gamma$ yields a stability function $\phi_\gamma$ keeping track of the walls that are crossed by $\gamma$.

\begin{theorem}\label{paths}
Let $\A$ be an abelian length category such that $rk(K_0(A))=n$. 
Then every red path $\gamma:[0,1]\to \mathbb{R}^n$ induces a stability function $\phi_{\gamma}:\A\to [0,1]$ defined by $\phi_{\gamma}(M)=t_M$, where $t_M$ is the unique element in $[0,1]$ such that $\langle \gamma(t_M), [M] \rangle=0$. 
Moreover $M$ is $\phi_{\gamma}$-semistable if and only if $M$ is $\gamma(t_M)$-semistable. 
\end{theorem}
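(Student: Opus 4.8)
The statement has two parts: first, that the assignment $\phi_\gamma(M) = t_M$ is a well-defined stability function in the sense of Definition \ref{seesaw}; second, that $\phi_\gamma$-semistability coincides with $\gamma(t_M)$-semistability in King's sense. I would treat these in order, since the second part relies on having the function in hand.

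**Setting up the function.** First I would confirm $\phi_\gamma$ is well-defined. By the definition of a red path, every nonzero $M$ has a \emph{unique} $t_M \in [0,1]$ with $\rho_M(t_M) = \langle \gamma(t_M), [M]\rangle = 0$, so $\phi_\gamma(M) = t_M$ is unambiguous; and since $[M]$ depends only on the isomorphism class of $M$ in $K_0(\A)$, the function is constant on isomorphism classes. The target is the totally ordered set $[0,1]$, as required.

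**The see-saw property.** This is the heart of the first part. Given a short exact sequence $0 \to L \to M \to N \to 0$ of nonzero objects, additivity of the class in the Grothendieck group gives $[M] = [L] + [N]$, hence for every $t$ one has the pointwise identity of continuous functions
$$\rho_M(t) = \rho_L(t) + \rho_N(t).$$
The plan is to compare the three roots $t_L, t_M, t_N$ using Lemma \ref{darkside}, which states $\rho_X(t) < 0$ precisely when $t > t_X$ (and dually $\rho_X(t) > 0$ when $t < t_X$, with $\rho_X(t)=0$ only at $t=t_X$). Evaluating the additivity relation at $t = t_M$ yields $0 = \rho_L(t_M) + \rho_N(t_M)$, so $\rho_L(t_M)$ and $\rho_N(t_M)$ have opposite signs or both vanish. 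Lemma \ref{darkside} then translates signs into order relations: if $\rho_L(t_M) > 0$ then $t_M < t_L$, forcing $\rho_N(t_M) < 0$ and hence $t_N < t_M$, giving $t_N < t_M < t_L$, i.e. $\phi_\gamma(N) < \phi_\gamma(M) < \phi_\gamma(L)$; the symmetric case gives the reversed chain; and if both vanish then $t_L = t_M = t_N$ by uniqueness of the root. These are exactly the three see-saw alternatives (one is the mirror of the other, matching the figure). I expect the main subtlety here to be handling the boundary/equality cases cleanly via the \emph{uniqueness} clause in the definition of red path, so that $\rho_X$ changes sign exactly once and a vanishing value at $t_M$ truly forces equality of roots rather than merely $\rho_L(t_M)=\rho_N(t_M)=0$ with distinct later sign behaviour.

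**Matching the two stabilities.** For the second assertion, fix $M$ with phase $t_M$ and set $\theta = \gamma(t_M)$. By construction $\langle \theta, [M]\rangle = \rho_M(t_M) = 0$, so the normalization condition in Definition \ref{stability} holds. It remains to compare, for a proper subobject $L \subset M$, the condition $\langle \theta, [L]\rangle \le 0$ (King $\gamma(t_M)$-semistability) with $\phi_\gamma(L) \le \phi_\gamma(M)$, i.e. $t_L \le t_M$ (Rudakov $\phi_\gamma$-semistability). Here Lemma \ref{darkside} again supplies the bridge: $\langle \gamma(t_M), [L]\rangle \le 0$ holds iff $t_M \ge t_L$ (using that $\rho_L(t)<0 \iff t > t_L$, and $=0$ iff $t=t_L$). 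Thus the inequality on King's pairing for every proper subobject is equivalent to $t_L \le t_M$ for every proper subobject, which is precisely $\phi_\gamma$-semistability. The strict versions match analogously to identify stability. I would present this as a short biconditional chain; the only care needed is to keep the weak inequalities aligned with the semistable (rather than stable) case and to invoke Lemma \ref{darkside} at $t = t_M$ for each subobject $L$.
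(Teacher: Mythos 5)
Your proposal is correct and follows essentially the same route as the paper: well-definedness from the uniqueness clause in the definition of a red path, the see-saw property via additivity $[M]=[L]+[N]$ evaluated at $t=t_M$ together with the sign/order dictionary of Lemma \ref{darkside}, and the identification of the two notions of semistability by applying that same lemma to each proper subobject at $t=t_M$. The only cosmetic difference is that you organize the see-saw argument as a case split on the sign of $\rho_L(t_M)$, whereas the paper cases on the comparison of $\phi_\gamma(L)$ with $\phi_\gamma(M)$; the content is identical.
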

 
\begin{proof}
Let $\gamma$ be a red path in $\mathbb{R}^n$. 
First, note that $\phi_{\gamma}$ is a well defined function by the definition of red path. 
We want to show that $\phi_\gamma$ induces a stability structure in $\A$. 
It follows from lemma \ref{darkside} that $\langle \gamma(t), [M] \rangle<0$ if and only if $t>t_M$ and $\langle \gamma(t), [M] \rangle>0$ if and only if $t<t_M$. 

Consider a short exact sequence 
$$0\ra L\ra M\ra N\ra 0$$ 
and suppose that $\phi_\gamma([L])<\phi_\gamma([M])$. 
Then $\langle \gamma(t_M), [M] \rangle=0$ and $\langle \gamma(t_M), [L] \rangle<0$. 
Therefore 
$$\langle \gamma(t_M), [N] \rangle=\langle \gamma(t_M), [M]-[L] \rangle=\langle \gamma(t_M), [M] \rangle-\langle \gamma(t_M), [L] \rangle>0.$$ 
Hence $\phi_{\gamma}([L])<\phi_\gamma([M])<\phi_\gamma([N])$.
The other two conditions of the see-saw property are proved in a similar way, which shows that $\phi_\gamma$ is a stability function by Definition \ref{seesaw}.

Now we prove the moreover part of the statement.
Let $M$ be a non-zero object of $\A$ and suppose that $M$ is $\phi_{\gamma}$-semistable. 
Then $\phi_{\gamma}(L)\leq\phi_{\gamma}(M)$ (i.e., $t_L\leq t_M$) for every proper subobject $L$ of $M$.
Therefore $\langle \gamma(t_M), [L] \rangle\leq 0$ by lemma \ref{darkside}.
Thus $M$ is $\gamma(t_M)$-semistable.

On the other hand, suppose that $M$ is $\gamma(t_M)$-semistable and $L$ is a proper object of $M$. 
Then $\langle \gamma(t_M), [L] \rangle\leq 0$, hence $t_L\leq t_M$ by Lemma \ref{darkside}. 
Therefore $M$ is $\phi_{\gamma}$-semistable. 
\end{proof}

As a consequence of the previous theorem we get the following result, in which we use the notations of Subsection~\ref{Sect:torsion} with $\P=[0,1]$.

\begin{prop}\label{green-to-red}
Let $\gamma$ be a red path and let $\phi_{\gamma}$ be the stability structure induced by $\gamma$. 
Then $\T_0=\A$ and $\T_1=\{0\}$.
\end{prop}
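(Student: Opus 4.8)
The plan is to establish the two equalities $\T_0 = \A$ and $\T_1 = \{0\}$ separately, using the characterization of $\T_p$ from Proposition~\ref{eqdeftorsion}(4), namely $\T_p = \{M \in \A : \phi_\gamma(N) \geq p \text{ for every quotient } N \text{ of } M\}$, together with the fact that $\phi_\gamma$ takes values in $[0,1]$.

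First I would show $\T_0 = \A$. By definition of a red path, every nonzero object $M$ has $t_M \in [0,1]$, so $\phi_\gamma(M) = t_M \geq 0$ for every object. Hence for any $M \in \A$, every quotient $N$ of $M$ satisfies $\phi_\gamma(N) \geq 0$, which by the characterization in Proposition~\ref{eqdeftorsion}(4) with $p=0$ gives $M \in \T_0$. Therefore $\T_0 = \A$.

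Next I would show $\T_1 = \{0\}$. Suppose $0 \neq M \in \T_1$. Then every quotient of $M$, and in particular $M$ itself viewed as a quotient of $M$, must satisfy $\phi_\gamma \geq 1$. Since $\phi_\gamma$ takes values in $[0,1]$, this forces $\phi_\gamma(M) = t_M = 1$, i.e.\ $\langle \gamma(1), [M] \rangle = 0$. But $\gamma(1) = (-1, \dots, -1)$, so $\langle \gamma(1), [M] \rangle = -\sum_i [M]_i$, where the coordinates $[M]_i$ of the class of a nonzero object in a length category are nonnegative integers, not all zero. Hence $\langle \gamma(1), [M] \rangle < 0$, contradicting $t_M = 1$. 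Therefore no nonzero object lies in $\T_1$, so $\T_1 = \{0\}$.

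I expect the main subtlety to lie in the argument for $\T_1 = \{0\}$, specifically in justifying that $\langle \gamma(1), [M] \rangle \neq 0$ for a nonzero $M$. This hinges on the sign of the pairing $\rho_M(1) = \langle \gamma(1), [M] \rangle$; by the very setup preceding Definition~\ref{greenpaths} one has $\rho_M(1) < 0$ for every nonzero $M$, which is exactly what rules out $t_M = 1$ via Lemma~\ref{darkside}. The cleanest route is thus to invoke that observation directly rather than re-deriving it from the coordinates: since $\rho_M(1) < 0$, Lemma~\ref{darkside} gives $1 > t_M$, so $\phi_\gamma(M) < 1$, and then the characterization of $\T_1$ immediately excludes $M$. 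Symmetrically, $\rho_M(0) > 0$ confirms $t_M > 0$ is not forced but is consistent with every object lying in $\T_0$.
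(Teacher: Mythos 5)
Your proof is correct and rests on the same key fact as the paper's, namely that $\langle \gamma(1), [M]\rangle < 0$ (and dually $\langle \gamma(0), [M]\rangle > 0$) for every nonzero $M$, so that $1 \notin \phi_\gamma(\A)$. The only cosmetic difference is that you unwind the quotient characterization of $\T_p$ from Proposition~\ref{eqdeftorsion}(4) directly, whereas the paper routes the conclusion $\T_1=\{0\}$ through Lemma~\ref{supremum}(a); both are valid.
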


\begin{proof}
Let $\gamma$ be a red path and let $M$ a non-zero object in $\A$. 
Then $\langle \gamma(1), [M] \rangle=\langle (-1, -1,\dots, -1), [M]\rangle<0$. 
Hence $1\not\in \phi_{\gamma}(\A)$. Therefore Lemma \ref{supremum} implies that $\T_1=\{0\}$. 

Using dual arguments one can prove that $\F_0=\{0\}$.
Then $$\T_0 = \{X\in \A : \Hom_{\A}(X, 0)=0\}= \A.$$
This finishes the proof.
\end{proof}

\begin{cor}\label{maximal-paths}
Let $\gamma$ be a red path and let $\phi_{\gamma}$ be the stability function induced by $\gamma$. 
Then $\gamma$ induces a maximal green sequence if and only if the set $\mathcal{S}_\gamma$ of $\phi_\gamma$-stables objects is finite and they are such that $t_{M}\neq t_{N}$ for every pair of non-isomorphic $M,N\in \mathcal{S}_\gamma$. 
\end{cor}

\begin{proof}
The fact that if $\gamma$ induces a maximal green sequence, then the set $\mathcal{S}_\gamma$ of $\phi_\gamma$-stable objects has the properties indicated in the statement follow directly from Theorem \ref{maximalgreensequences}.

Now we show the other implication.
Since $\mathcal{S}_\gamma$ is finite, we can write it as $\mathcal{S}_\gamma = \{M_1, \dots, M_n\}$.
Without loss of generality we can suppose that $t_{M_i}\leq t_{M_j}$ if $i<j$. 
It is easy to see that the finiteness of $\mathcal{S}_\gamma$ implies that the chain of torsion classes induced by $\gamma$ is finite.
Moreover Proposition \ref{green-to-red} implies that $\T_0=\A$ and $\T_1=\{0\}$. 
Finally, we have that $t_{M_i}<t_{M_j}$ whenever $i<j$, then we have that $\phi_{\gamma}$ is a discrete. 
Therefore Theorem \ref{maximalgreensequences} implies that $\gamma$ induces a maximal green sequence. 
\end{proof}

\bigskip

Recall that Bridgeland associated in \cite[Lemma 6.6]{B16} a torsion class $\mathcal{T}_{\theta}$ to every $\theta \in \mathbb{R}^n$ as follows.
$$\mathcal{T}_{\theta}=\{M\in \A:\langle \theta,N\rangle \geq 0 \text{ for every quotient $N$ of $M$}\}$$
On the other hand, in subsection \ref{Sect:torsion} we have studied the torsion classes associated to stability functions.
Therefore, given a red path $\gamma$ it is natural to compare the torsion classes given by $\T_{\gamma(t)}$ and $\T_{t}$ for all $t\in [0,1]$.
This is done in the following proposition.

\begin{prop}\label{redtorsion}
Let $\gamma$ be a red path, $\T_t$ the torsion class associated to the stability function $\phi_\gamma$ and $\T_{\gamma(t)}$ as defined by Bridgeland.
Then $\T_t=\T_{\gamma(t)}$ for every $t\in[0,1]$.
\end{prop}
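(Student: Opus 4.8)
The plan is to reduce both torsion classes to a characterization in terms of quotients and then translate between the two defining conditions using Lemma~\ref{darkside}. First I would invoke Proposition~\ref{eqdeftorsion}(4), which gives for the stability function $\phi_\gamma$ the description
$$\T_t=\{M\in\A \ : \ \phi_\gamma(N)\geq t \text{ for every quotient $N$ of $M$}\}.$$
Since $\phi_\gamma(N)=t_N$ by the definition in Theorem~\ref{paths}, membership of $M$ in $\T_t$ is governed by the condition $t_N\geq t$ for every quotient $N$ of $M$. On the other hand, Bridgeland's class is defined by
$$\T_{\gamma(t)}=\{M\in\A \ : \ \langle \gamma(t),[N]\rangle\geq 0 \text{ for every quotient $N$ of $M$}\},$$
so both classes are cut out by conditions ranging over the \emph{same} family of quotients $N$ of $M$. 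The problem therefore reduces to an object-by-object comparison of the two sign conditions.

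The key step is to compare $\langle \gamma(t),[N]\rangle\geq 0$ with $\phi_\gamma(N)\geq t$ for a single nonzero object $N$, and this is exactly the content of Lemma~\ref{darkside}. That lemma states $\langle \gamma(t),[N]\rangle<0$ if and only if $t>t_N$; taking the negation of both sides gives $\langle \gamma(t),[N]\rangle\geq 0$ if and only if $t\leq t_N$, that is, $\phi_\gamma(N)\geq t$. Hence, for every quotient $N$ of $M$, the Bridgeland inequality holds precisely when the $\phi_\gamma$-inequality holds. Quantifying this equivalence over all quotients $N$ of $M$ shows $M\in\T_{\gamma(t)}$ if and only if $M\in\T_t$, which yields $\T_t=\T_{\gamma(t)}$ for every $t\in[0,1]$.

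I do not anticipate a genuine obstacle: once both torsion classes are put into their respective quotient forms, the entire argument is a direct application of the sign criterion of Lemma~\ref{darkside}. The only point requiring mild care is the zero quotient $N=0$, which satisfies both conditions vacuously since $\langle \gamma(t),[0]\rangle=0\geq 0$, so it plays no role in either description and causes no difficulty.
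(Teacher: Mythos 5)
Your proof is correct, and it takes a more direct route than the paper's. The paper proves the two inclusions separately and with different machinery: for $\T_t\subseteq\T_{\gamma(t)}$ it uses the $\text{Filt}$ description of $\T_t$ from Proposition~\ref{eqdeftorsion} and checks the Bridgeland condition on $\phi_\gamma$-stable generators, and for the reverse inclusion it passes through the maximally destabilizing quotient of an object of $\T_{\gamma(t)}$. You instead put \emph{both} classes in their quotient-characterized form (Proposition~\ref{eqdeftorsion}(4) on one side, Bridgeland's definition on the other) and observe that Lemma~\ref{darkside}, read in the contrapositive, makes the two defining inequalities $\langle\gamma(t),[N]\rangle\geq 0$ and $\phi_\gamma(N)\geq t$ literally equivalent for each nonzero quotient $N$; equality of the classes then follows in one stroke, with no appeal to Harder--Narasimhan filtrations, stable objects, or maximally destabilizing quotients. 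Your handling of the zero quotient is the right precaution, since $\phi_\gamma$ is undefined there while the Bridgeland condition is vacuous. The only thing each approach ``buys'': the paper's argument makes visible how the stable objects generate $\T_t$ under $\text{Filt}$, which is thematically consistent with the rest of Section~4, whereas yours is shorter, cleaner, and avoids the slight imprecisions (stray subscripts, strict versus non-strict inequalities) present in the published proof.
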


\begin{proof}
By definition we have that 
$$\mathcal{T}_{\gamma_t}=\{M\in \A:\langle \gamma_t,N\rangle \geq 0 \text{ for every quotient $N$ of $M$}\}.$$
Now, applying Lemma \ref{darkside} this can be rewriten as 
$$\T_{\gamma_t}=\{M\in\A \ : \phi_{\gamma_t}(N)\geq p \text{ for every quotient $N$ of $M$}\},$$
which is exactly $\T_t$ by Proposition \ref{eqdeftorsion}.
This finishes the proof. 
\end{proof}
\bigskip

In \cite{B16}, Bridgeland defined the $\D$-generic paths in order to show the consistency of a scattering diagram that he introduced for the module category of certain algebras. 
These scattering diagrams consists on the wall and chamber structure of the module category of the algebra, where each wall is decorated by an element of the motivic Hall algebra associated to the original algebra.
Now, in the following definition we recall the combinatorial properties of $\D$-generic paths.
By abuse of notation we call them $\D$-generic paths as well, even if we do not consider some of the geometrical and algebraic aspects of the original definition.

\begin{defi}\cite[\S 2.7]{B16}\label{defDgeneric}
We say that a smooth path $\gamma:[0,1]\to \mathbb{R}^n$ is a $\mathfrak{D}$-generic path if:
\begin{enumerate}
\item $\gamma(0)$ and $\gamma(1)$ do not belong to the stability space $\D(M)$ of a non-zero object $M$, that is, they are located inside some chambers;
\item $\gamma$ do not meet any cone of codimension greater that 1;
\item all intersections of $\gamma$ with a wall are transversal.
\end{enumerate}
\end{defi}

Note that the second condition of the previous definition implies that $\gamma$ do not pass through the intersection of two non-parallel walls. 
Given that every wall is induced by some object $M$, this implies that $\gamma$ crosses the intersection of the stability space of two non-isomorphic modules $M, N$ only if their elements in the Grothendieck group of the category $[M]$ and $[N]$ are parallel.

Also, if $\gamma$ crosses at $t_0$ a wall $\D(M)$ transversely it means that the vector $\gamma'(t_0)$ does not belong to the hyperplane in which the wall $\D(M)$ is contained. 
In other words, this means that $\gamma'(t_0)$ is not perpendicular to $[M]$. 

These remarks allow us to define $\D$-generic paths equivalentely as follows.

\begin{defi}\cite[\S 2.7]{B16}\label{defDgeneric1}
We say that a smooth path $\gamma:[0,1]\to \mathbb{R}^n$ is a $\mathfrak{D}$-generic path if:
\begin{enumerate}
\item $\gamma(0)$ and $\gamma(1)$ do not belong to the stability space $\D(M)$ of a non-zero object $M$, that is, they are located inside some chambers;
\item If $\gamma(t)$ belongs to the intersection $\D(M)\cap\D(N)$ of two walls, then the dimension vector $[M]$ of $M$ is a scalar multiple of the dimension vector $[N]$ of $N$;
\item whenever $\gamma(t)$ is in $\D(M)$, then $\langle \gamma'(t), [M] \rangle \neq 0$.
\end{enumerate}
\end{defi}

It is clear that every red path $\gamma'$ inducing a maximal green sequence in $\A$ satisfies condition 1.
Moreover, Theorem \ref{paths} and Corollary \ref{maximal-paths} say that $\gamma'$ has at most one $\phi_{\gamma'}$-stable object for every $t \in [0,1]$.
This implies in particular that $\gamma'$ is in the intersection of $\D(M_1)$ and $\D(M_2)$ if and only if $M_1$ and $M_2$ are filtered by the same $\phi_{\gamma'}$-stable module $M$. 
In particular this implies condition 2 of Definition \ref{defDgeneric1}.

On the other hand, one of the main results in \cite{BSTw&c} says that every maximal green sequence  is induced by a $\D$-generic path.
This lead us to the following conjecture.

\begin{conj}
Let $\A$ be an abelian length category of finite rank. 
Then every maximal green sequence in $\A$ is induced by a red path in the wall and chamber structure of $\A$.
\end{conj}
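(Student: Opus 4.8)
The plan is to start from a maximal green sequence in $\A$, realize it first by the $\D$-generic path supplied by \cite{BSTw&c}, and then deform that path into a genuine red path without disturbing the induced chain of torsion classes. Concretely, write the given sequence as $\{0\}=\X_0\subsetneq\X_1\subsetneq\cdots\subsetneq\X_n=\A$. The input is the result quoted just above the conjecture: this chain is induced by a $\D$-generic path $\gamma_0$, whose successive wall crossings $\D(B_1),\dots,\D(B_n)$ are labelled by the bricks of the mutations $\X_{i-1}\subsetneq\X_i$ (see \cite{BSTw&c}). By Proposition \ref{redtorsion} it then suffices to produce a \emph{red} path $\gamma$ for which the Bridgeland torsion classes $\T_{\gamma(t)}$ run exactly through $\X_0,\dots,\X_n$ as $t$ increases from $0$ to $1$.

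First I would settle the two structural discrepancies between $\D$-generic and red paths. A red path must have the prescribed endpoints $(1,\dots,1)$ and $(-1,\dots,-1)$; since these lie in the chambers cut out by $\T_{(1,\dots,1)}=\A$ and $\T_{(-1,\dots,-1)}=\{0\}$, which are also the extreme chambers of $\gamma_0$, I would prepend a segment from $(1,\dots,1)$ to $\gamma_0(0)$ and append one from $\gamma_0(1)$ to $(-1,\dots,-1)$ inside those outermost chambers, meeting no new wall, and reversing the parametrization if necessary. By Proposition \ref{green-to-red} these endpoints already force $\T_{\gamma(0)}=\A$ and $\T_{\gamma(1)}=\{0\}$. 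The second, decisive discrepancy is the defining property of a red path: for \emph{every} nonzero object $M$ the function $\rho_M(t)=\langle\gamma(t),[M]\rangle$ must vanish at a single parameter $t_M$, whereas a $\D$-generic path only controls the finitely many bricks it crosses.

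The engine for this global unique-crossing condition is coordinate monotonicity. If each coordinate of $\gamma$ is a strictly decreasing function of $t$, then for $s<t$ one has $\langle\gamma(t)-\gamma(s),[M]\rangle<0$ for every dimension vector $[M]\in\N^n\setminus\{0\}$, so $\rho_M$ decreases strictly from $\rho_M(0)>0$ to $\rho_M(1)<0$ and has a unique zero, exactly as Lemma \ref{darkside} demands. I would therefore try to replace $\gamma_0$ by a path with strictly decreasing coordinates that still threads, in the correct order, through the relative interiors of $\D(B_1),\dots,\D(B_n)$ at times $t_{B_1}<\cdots<t_{B_n}$, keeping each $B_i$ stable at its crossing point so that Theorem \ref{paths} identifies it as a $\phi_\gamma$-stable object. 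Granting such a path, the $B_i$ form a finite family of $\phi_\gamma$-stable objects with pairwise distinct phases, so Corollary \ref{maximal-paths} (via Theorem \ref{maximalgreensequences}) shows that $\gamma$ induces a maximal green sequence, and the identification $\T_{\gamma(t)}=\T_t$ of Proposition \ref{redtorsion} matches it with the original chain step by step.

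The hard part is exactly the existence of this monotone threading. There is no a priori reason that a path all of whose coordinates decrease can visit the prescribed walls $\D(B_1),\dots,\D(B_n)$ in the prescribed order: the order in which a monotone sweep meets the hyperplanes $\langle\theta,[B_i]\rangle=0$ is governed by the geometry of the dimension vectors and may clash with the mutation order of the maximal green sequence. Worse, because $\A$ can contain infinitely many objects, one must simultaneously prevent the monotone path from acquiring \emph{extra} stable objects whose phases refine or permute the chain, i.e. guarantee that the distinct values $t_{B_1},\dots,t_{B_n}$ are the only phases carrying stable objects. I expect this combinatorial–geometric compatibility between the monotone order and the mutation order, together with the control it demands over the infinitely many non-brick objects, to be the main obstacle and the reason the statement is only conjectured; a natural first milestone would be the case of finite representation type, where finitely many walls occur and a monotone threading can plausibly be built by an explicit inductive perturbation.
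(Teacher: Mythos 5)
The statement you are addressing is presented in the paper as a \emph{conjecture}: the authors give no proof, only the circumstantial evidence that every maximal green sequence is induced by a $\D$-generic path (one of the main results of \cite{BSTw&c}) and that red paths inducing maximal green sequences satisfy the first two conditions of Definition \ref{defDgeneric}. So there is no argument in the paper to compare yours against, and the only question is whether your proposal actually closes the gap. It does not, and you say so yourself. Everything hinges on producing a path that (i) crosses the walls $\D(B_1),\dots,\D(B_n)$ attached to the given chain of torsion classes in the prescribed order, (ii) satisfies the defining red-path condition that $\rho_M(t)=\langle\gamma(t),[M]\rangle$ has a unique zero for \emph{every} nonzero object $M$, and (iii) carries no stable objects beyond the $B_i$ and assigns them pairwise distinct phases. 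Your preparatory reductions are sound: the endpoint adjustment inside the two extreme chambers is harmless, Proposition \ref{redtorsion} correctly reduces the problem to matching the Bridgeland torsion classes $\T_{\gamma(t)}$ against the given chain, and your observation that a coordinatewise strictly decreasing path is automatically red (each $[M]$ lies in $\N^n\setminus\{0\}$, so each $\rho_M$ is strictly decreasing and has a unique zero) is a genuinely useful sufficient criterion for redness. But the existence of a path satisfying (i)--(iii) is precisely the content of the conjecture, and you offer no mechanism for constructing it; the proposal therefore establishes nothing beyond what the paper already records.

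Two further cautions on the strategy itself. First, by insisting on coordinatewise monotonicity you would be proving something strictly stronger than redness, and this stronger statement may fail even where the conjecture holds: the order in which a monotone sweep meets the hyperplanes $\{\theta : \langle\theta,[B_i]\rangle=0\}$ is constrained by the geometry of the dimension vectors $[B_i]$, while the mutation order of a maximal green sequence need not be compatible with any such sweep. A viable approach would likely have to use the full flexibility of Definition \ref{greenpaths}, in which $\rho_M$ may oscillate provided it crosses zero exactly once. Second, condition (iii) is more delicate than ``no extra stable objects refining the chain'': Theorem \ref{maximalgreensequences} requires discreteness at \emph{every} phase, so you must also exclude two nonisomorphic stable objects sharing the same parameter value $t$; since red paths are allowed to pass through intersections of walls, this is a nontrivial genericity requirement on top of the threading problem. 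Your outline is a reasonable research plan --- and restricting first to finite representation type, where only finitely many walls occur, is the right first milestone --- but as written it is a restatement of the difficulty, not a proof.
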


\bibliography{BibliografiaTreffinger}{}
\bibliographystyle{abbrv}

\end{document}